\newtheorem{theorem}{Theorem}[section]
\newtheorem{lemma}[theorem]{Lemma}
\newtheorem{proposition}[theorem]{Proposition}
\DeclareMathOperator{\argmin}{argmin}
\newcommand{\R}{\mathbb{R}}
\newcommand{\inner}[2]{\langle{#1},{#2}\rangle}
\newcommand{\norm}[1]{\|#1\|}
\newcommand{\tos}{\rightrightarrows} 
\newcommand{\X}{\mathcal{X}}
\newcommand{\Z}{\mathcal{Z}}
\newcommand{\M}{\mathcal{M}}
\newcommand{\vgap}{\vspace{.1in}}
\newcommand{\tz}{\tilde z}
\newcommand{\bi}{\begin{itemize}}
\newcommand{\ei}{\end{itemize}}
\newcommand{\ba}{\begin{array}}
\newcommand{\ea}{\end{array}}
\begin{document}

\title{ %On the pointwise iteration-complexity of a dynamic regularized  alternating direction method of multipliers
%with larger stepsize
On the pointwise iteration-complexity of a dynamic regularized  ADMM
with over-relaxation stepsize
%On the stepsize of  a dynamic regularized  alternating direction method of multipliers
%with over-relaxation parameter
%On the $\mathcal{O}(\rho^{-1}\log(\rho^{-1}))$ pointwise iteration-complexity of a dynamic regularized  alternating direction %method of multipliers with larger stepsize
}

\author{
    M.L.N. Gon\c calves
    \thanks{IME/UFG- Caixa
    Postal 131, CEP 74001-970, Goi\^ania-GO, Brazil. (E-mail: {\tt
       maxlng@ufg.br}).  The  work of this author was
    supported in part by  CNPq Grants 406250/2013-8, 444134/2014-0 and 309370/2014-0.}
}

%\date{January 4, 2016}

\maketitle

\begin{abstract}
In this paper, we extend the  improved pointwise iteration-complexity  result of a
dynamic regularized  alternating direction method of multipliers (ADMM) 
for a new  stepsize domain. In this complexity analysis, the stepsize parameter can even be chosen in the interval 
$(0,2)$ instead of  interval $(0,(1+\sqrt{5})/2)$.
As usual, our analysis is established by 
  interpreting  this  ADMM variant as an instance of a  hybrid proximal extragradient framework applied to  a  specific monotone inclusion problem.  
 % It is shown that this  ADMM variant 
%finds a $\rho-$approximate solution  in at most  $ \mathcal{O}\left(\rho^{-1}\log(\rho^{-1})\right)$
%iterations.
%More specifically, we now consider the stepsize domain in the interval 
%$(0,2)$ instead of interval $(0,(1+\sqrt{5})/2)$. This pointwise iteration-complexity result  is established by 
  %interpreting  this  ADMM variant as an instance of a  hybrid proximal extragradient framework applied to  a  specific %monotone inclusion problem.  
%for solving 
%linearly constrained convex problems. With an over-relaxation stepsize domain, we prove that this ADMM variant 
%finds a $\rho-$approximate solution  in at most  $ \mathcal{O}\left(\rho^{-1}\log(\rho^{-1})\right)$
%iterations.
%This pointwise iteration-complexity result  is established by 
 % interpreting%  this  ADMM variant as an instance of a  hybrid proximal extragradient framework applied to  a  specific %monotone inclusion problem.
\\
  \\
  2000 Mathematics Subject Classification: 
  47H05, 49M27, 90C25, 90C30, 90C60, 
  65K10.
\\
%   \bigskip 
\\   
Key words: alternating direction method of multipliers,  hybrid proximal extragradient framework,  
 pointwise iteration-complexity, convex programming.

 \end{abstract}

%    (eck 2 op)
%
%%%%%%%%%
%
% 90-XX Operations research, mathematical programming
%
%   90Cxx Mathematical programming
%
%     90C60 Abstract computational complexity for mathematical programming
%     problems
%
%     90C25 Convex programming
%     90C30 nonlinear programming
%     
%
%%%%%%%%%%
%
%%%%%%%%%
% 47-XX Operator theory
% 
%   47Hxx  Nonlinear operators and their properties
%
%     47H05 Monotone operators 
%
%   47Jxx Equations and inequalities involving nonlinear operators [See also
%       46Txx] {For global and geometric aspects, 47H05 Monotone operators
% 
%     47J20 Variational and other types of inequalities involving
%         nonlinear operators (general) (mont 2)
%     
%      47J22 Variational and other types of inclusions
%
%     47J25  Methods for solving nonlinear operator equations (general)
%
%   47Nxx Miscellaneous applications of operator theory
%
%    47N10 Applications in optimization, convex analysis, 
%          mathematical programming, economics
%%%%%%%%%%
%
% 49-XX
% Calculus of variations and optimal control; optimization
%
%   49Mxx
%   Methods of successive approximations
%
%     49M27 Decomposition methods
%%%%%%%%%%
%
% 65-XX Numerical analysis
%
%    65Xxx Mathematical programming, optimization and variational techniques
%
%      65K05 Mathematical programming {Algorithms; for theory see 90Cxx}
%
%      65K10 Optimization and variational techniques [See also 49Mxx, 93B40]
%
%    65Jxx Numerical analysis in abstract spaces
%
%      65J20 Improperly posed problems; regularization
%%%%%%%%%%

\pagestyle{plain}

\section{Introduction} \label{sec:int}
We are interested in the following  linearly constrained convex problem 
\begin{equation} \label{optl}
\min \{ f(x) + g(y) : A x + B y =b, \; x\in \R^n, y \in \R^p \}
\end{equation}
where $f: \R^{n} \to \R$ and $g:\R^{p} \to \R$ are  convex functions,  $A\in \R^{m \times n}$,  $B \in \R^{m\times p}$ and  $b \in \R^m$.
We assume that  the solution set of \eqref{optl} is nonempty. 
Convex optimization problems with a separable structure such as \eqref{optl}
 appear in many applications areas such as machine learning, compressive sensing and image processing.
The augmented  Lagrangian method (see, e.g., \cite{Ber1}) attempts to solve~\eqref{optl} directly without taking into account its particular structure. 
To overcome this drawback, a variant of the augmented Lagrangian method, namely, the alternating direction method of multipliers (ADMM),  was proposed and studied in \cite{0352.65034,0368.65053}. 
The ADMM takes full advantage of the special structure of the problem by considering each variable separably in an
alternating form  and coupling them into 
the Lagrange multiplier updating; for detailed reviews, see  \cite{Boyd:2011,glowinski1984}. 

Recently, several variants of the  ADMM for solving \eqref{optl}  have been proposed in the literature; see, for example, \cite{Cui,Deng1,GADMM2015,MJR,Gu2015,Hager,He2017,HeLinear,He2015,Lin,PauloRoberto}. 
A dynamic regularized  ADMM (DR-ADMM) with stepsize  $\theta \in (0,(1+\sqrt{5})/2)$ was proposed by 
Gon\c calves at al. \cite{MJR}  whose the   pointwise iteration-complexity is substantially  better than ones   for the
 ADMMs. More specifically, for given $\rho>0$,
 it was proved in \cite{MJR} that the DR-ADMM 
finds a $\rho$-approximate solution of \eqref{optl}  in at most  $ \mathcal{O}\left(\rho^{-1}\log(\rho^{-1})\right)$
iterations.  Although different criteria are used,  in general the  ADMM and its variants  need $ \mathcal{O}\left(\rho^{-2}\right)$ iterations to find this same approximate solution  (see, e.g., \cite{Cui,Deng1,GADMM2015,Gu2015,Hager,He2017,HeLinear,He2015,Lin,monteiro2010iteration}). 
The main goal of this work is to  extend the  improved  pointwise iteration-complexity result of the DR-ADMM obtained in  \cite{MJR}  for a new  stepsize domain $\theta \in (0,(1-\alpha+\sqrt{\alpha^2+6\alpha+5})/2)$, where  $\alpha $ is a nonnegative proximal factor  associated to  the proximal term added to the second subproblem of the  method (see the DR-ADMM in Section~\ref{sec:amal}).  Since the limit of $(\sqrt{\alpha^2+6\alpha+5}-\alpha)$ as $\alpha$ goes to infinity is  3,  the latter stepsize domain becomes  $ (0,2)$    (resp. $(0,(1+\sqrt{5})/2)$) when 
$\alpha$ is sufficiently large  (resp. $\alpha=0$).
It is worth pointing out that the ADMM with a larger stepsize parameter  can  substantially  improve the performance of the method in many applications (see  \cite{FPST_editor,glowinski1984} for more details).
As in \cite{MJR}, our complexity analysis is  done by rewriting problem~\eqref{optl} as a monotone inclusion problem and by analyzing the  DR-ADMM  in the setting of a  generalized  hybrid proximal extragradient (HPE).
It should  be  mentioned that paper \cite{MJR3} was the first one  to discuss complexity results for the ADMM  with stepsize $\theta \in (0,2)$  
 for solving non-convex  linearly constrained  problems and, subsequently, paper  \cite{He2017} studied  convergence and complexity results for  the ADMM with the same stepsize domain of this paper for the convex case.

% The first paper to discuss complexity results for the ADMM  with stepsize $\theta \in (0,2)$ was \cite{???}, and subsequently 

\vspace{2mm}
\noindent { \bf Notation:} 
The set of real numbers is denoted by $\R$. 
 The set of non-negative real numbers  and the set of positive real numbers are denoted by $\R_+$ and $\R_{++}$, respectively. 
For $t>0$, we let $\log^+(t):=\max\{\log t,0\}$. For a finite-dimensional 
real vector space  $\X$ with inner product   $\inner{\cdot}{\cdot}$, its  induced norm is denoted by
$\|\cdot\|$. Denote by $\M^{\X}_{+}$  the space of
selfadjoint positive semidefinite  linear operators on $\X$. For each $H \in  \M^{\X}_{+}$,  the seminorm
induced by $H$ on $\X$ is defined by $\|\cdot\|_H:=\sqrt{\inner{H(\cdot)}{\cdot}}$.

%%%%%%%
\section{Preliminaries results} \label{sec:bas}

In this section, we present  a dynamic regularized HPE   framework and  its pointwise
iteration-complexity result. This framework is an instance of   one  studied in \cite{MJR}. 

Consider  the monotone inclusion problem (MIP)
\begin{align} \label{eq:in}
 0\in T(z)
\end{align}
where $\Z$ is  a finite-dimensional 
real vector space and $T:\Z\tos \Z$ is a maximal monotone operator
\footnote{An operator $T:\Z\tos \Z$ is said to be   monotone if 
$
\inner{z-z'}{s-s'}\geq 0$, for every $z,z'\in \Z,$ $s\in T(z)$ and $s'\in T(z')$.
Moreover, $T$ is maximal monotone if it is monotone and, additionally, if $S$ is a monotone operator such that $T(z)\subset S(z)$ for every $z\in \Z$  then $T=S$.}.
We assume that 
 the solution set of~\eqref{eq:in}, denoted by $T^{-1}(0)$,  is nonempty. 

%%%%%%%%%%

The  dynamic regularized HPE framework attempts to solve the inclusion \eqref{eq:in} by solving approximately a sequence of regularized MIP of the following form
\begin{align} \label{eq:inre}
 0\in T(z)+\mu M(z-z_0)
\end{align}
where  $z_0\in \Z$,  $\mu>0$ and  $M \in \M^{\Z}_{+} $ are fixed.
%%%%%%%%
 % for our convergence analysis, it is essential to
  We also assume that the solution set of~\eqref{eq:inre}
\begin{equation}\label{defZu}
\bar{Z}_{\mu}(M):=\{z\in \Z: \; 0 \; \in \; T(z)+\mu M(z-z_0)\}
\end{equation}
  is nonempty for every $\mu>0$. 
   It can be shown that if 
$M$ is positive definite, then the operator $T(\cdot)+\mu M(\cdot-z_0)$
is maximal $\mu$-strongly monotone which in turn implies that the set $\bar{Z}_{\mu}(M)$ is nonempty for every $\mu>0$ (see, e.g., \cite[Corollary~12.44 and Proposition~12.54]{VariaAna}).
Moreover, the following relation between $\bar{Z}_{\mu}(M)$ and $T^{-1}(0)$ holds for every $\mu>0:$ 
\begin{equation}\label{reldist}
 \|z_0-\bar{z}_\mu\|_M \leq   \|z_0-\bar{z}\|_M \quad \forall  \bar{z}_\mu\in \bar{Z}_{\mu}(M), \; \forall \bar{z} \in T^{-1}(0).
\end{equation}
The above relation follows directly from \cite[Lemma~3.1]{MJR} with $(dw)_z(z')= (1/2)\|z'-z\|^2_M$ for every $z,z' \in \Z$.

Next, we present the  dynamic regularized HPE framework for solving  \eqref{eq:in}, which will be used in order   to analyze the ADMM variant of  Section~\ref{sec:amal}.\

\vgap
\vgap
{
\noindent
\fbox{
\begin{minipage}[h]{6.4 in}
{\bf Dynamic regularized HPE (DR-HPE) framework.}
\begin{itemize}
\item[(0)] Let $z_0\in \Z$, $(\eta_0,\sigma,\tau,\rho
)\in \R_{+}\times  [0,1)\times   (0,1)
\times \R_{++}$ and    $M\in \M^{\Z}_{+}$ be given, and set $\mu=1$ and $k=1$;

\item[(1)] find 
$(z_k,\tz_k,\eta_k)\in \Z\times \Z\times \R_+$  
       such that

       \begin{equation}\label{eq:ec.2}
  M(z_{k-1}-{z_k})  \in \left( T(\tz_k)+\mu M ({\tz}_k-{z_0})\right),  
  \end{equation}
      \begin{equation}	\label{eq:es.2}
 \|{z_k}- {\tz}_k\|_M^2 +\eta_k \leq \sigma \|{z_{k-1}} -\tz_{k}\|_M^2+(1-\tau)\eta_{k-1}; 
    \end{equation}
       
\item[(2)] if $\norm{z_{k-1}-{z_k}}_M\leq \rho/2$, then
 go to  step~3; otherwise,  set  $k\leftarrow k+1$ and go to step~1. 

\item[(3)]  compute 
$v_k:=z_{k-1}-{z_k}-\mu (\tilde z_k-z_0)$;
if  $\norm{v_k}_M\leq \rho$,
then stop and output $(\tilde{z},v) \leftarrow (\tilde z_k,v_k)$;
else, set $\mu  \leftarrow \mu/2$ and $k=1$,
and go to step~1. 
\end{itemize}
\noindent
{\bf end}
\end{minipage}
}
\noindent
\\[2mm]
{\bf Remarks.} 1) The DR-HPE framework corresponds to  the  framework~3 in \cite{MJR} with 
$\lambda_k=1$, $\varepsilon_k=0$ and $(dw)_z(z')=(1/2)\|z'-z\|^2_M$ for every $z,z'\in\Z$.
Now,  if $M$ is the identity operator  and  $\eta_k=0$,   it becomes the DR-HPE framework  in  \cite{Maicon} with $\lambda_k=1$ and  $\varepsilon_k=0$.
2)~The scalar $\mu$  plays the role of a regularization parameter which is dinamically adapted in order to control the 
term $M ({\tz}_k-{z_0})$ in~\eqref{eq:ec.2}.
3)   The  DR-HPE framework is a general setting which  does not specify how to obtain $(z_k,\tilde{z}_k,\eta_k)$ as in step~1.  Specific computation of these elements will depend on implementation of particular  instances of the framework and the properties of the operators $T$ and $M$. 
4)  If $M$ is positive definite and $\sigma= \eta_0=0$, then \eqref{eq:es.2} implies that $\eta_k=0$ and $z_k = \tilde z_k$ for every~$k$, and
then \eqref{eq:ec.2} reduces to an iteration of the proximal point method (in the metric $\|\cdot\|_M$) applied to~\eqref{eq:inre}.

The following result gives the pointwise
iteration-complexity bound for the DR-HPE framework.
%%%%%%%%
\begin{theorem}\label{th:main}
 Suppose  that  $1/(1-\sigma)$ and $1/\tau$ are    $\mathcal{O}(1)$. Then, the DR-HPE framework
  finds a  pair
$(\tilde{z} , v)$ satisfying
$
Mv \in T(\tilde{z})$ and $\norm{v}_M\leq \rho,
$
in at most
\begin{align*}
 \mathcal{O}\left(\left(1+\frac{\sqrt{d^2+\eta_0}}{\rho}\right)\left[1+
\log^+\left(
\dfrac{\sqrt{d^2+\eta_0}}
{ \rho}\right)
\right]\right)
\end{align*}
iterations, where $
d:=\inf \left\{\|z_0-z\|_M :z \in T^{-1}(0) \right\}.
$
\end{theorem}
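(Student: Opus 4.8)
The plan is to analyze the DR-HPE framework in two nested loops: the inner loop (step~1 and step~2, with $\mu$ fixed) and the outer loop (step~3, which halves $\mu$). For the inner loop, I would first establish a one-step decrease for a suitable Lyapunov-type quantity. Because of the HPE-type error condition \eqref{eq:es.2} and the fact that $\tilde z_k$ solves the regularized inclusion \eqref{eq:ec.2} inexactly, the standard argument gives that the sequence $\|z_0 - z_k\|_M^2$ (or rather, a potential combining $\|z_k - \bar z_\mu\|_M^2$ with $\eta_k$, where $\bar z_\mu$ is a solution of \eqref{eq:inre}) is nonincreasing, while simultaneously $\sum_k \|z_{k-1} - z_k\|_M^2$ is controlled. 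Concretely, using monotonicity of $T + \mu M(\cdot - z_0)$ together with \eqref{eq:ec.2}–\eqref{eq:es.2}, one derives an inequality of the form $\|z_k - \bar z_\mu\|_M^2 + \eta_k + (1-\sigma)\sum_{j\le k}\|z_{j-1}-z_j\|_M^2 \le \|z_0 - \bar z_\mu\|_M^2 + \eta_0$, up to the factor involving $\tau$; this is exactly Lemma-type content of \cite{MJR} with $\lambda_k = 1$, $\varepsilon_k = 0$.

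From the telescoped bound I would extract a pointwise estimate: there exists $k \le N$ with $\|z_{k-1} - z_k\|_M^2 \le (\|z_0 - \bar z_\mu\|_M^2 + \eta_0)/((1-\sigma)N)$, which combined with \eqref{reldist} (so that $\|z_0 - \bar z_\mu\|_M \le d$) yields $\|z_{k-1}-z_k\|_M \le \sqrt{(d^2+\eta_0)/((1-\sigma)N)}$. Hence the inner loop termination test $\|z_{k-1}-z_k\|_M \le \rho/2$ in step~2 fires after $N = \mathcal{O}((d^2 + \eta_0)/\rho^2)$ iterations for the first pass; and since $1/(1-\sigma) = \mathcal{O}(1)$ by hypothesis, this is $\mathcal{O}((d^2+\eta_0)/\rho^2)$. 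The subtlety is that after the first outer halving of $\mu$, the relevant "distance" $\|z_0 - \bar z_\mu\|_M$ can only decrease (again by \eqref{reldist} applied with the new $\mu$, or by a monotonicity-in-$\mu$ argument), so the same inner-iteration bound $\mathcal{O}((d^2+\eta_0)/\rho^2)$ applies uniformly to every outer round.

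Next I would count the outer rounds. When step~3 is reached we have $\|z_{k-1}-z_k\|_M \le \rho/2$, and $v_k = (z_{k-1}-z_k) - \mu(\tilde z_k - z_0)$; by \eqref{eq:ec.2} one checks $M v_k \in T(\tilde z_k)$, so the only obstruction to stopping is the size of $\mu \|\tilde z_k - z_0\|_M$. One shows $\|\tilde z_k - z_0\|_M$ stays bounded by something like $\mathcal{O}(\sqrt{d^2 + \eta_0})$ independently of $\mu$ (using the inner-loop bound on $\|z_k - \bar z_\mu\|_M$ and the triangle inequality $\|\tilde z_k - z_0\|_M \le \|\tilde z_k - z_k\|_M + \|z_k - \bar z_\mu\|_M + \|\bar z_\mu - z_0\|_M$, each term $\mathcal{O}(\sqrt{d^2+\eta_0})$). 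Therefore after $R$ halvings $\mu = 2^{-R}$, the term $\mu\|\tilde z_k - z_0\|_M = \mathcal{O}(2^{-R}\sqrt{d^2+\eta_0})$, and $\|v_k\|_M \le \rho/2 + \mathcal{O}(2^{-R}\sqrt{d^2+\eta_0}) \le \rho$ once $2^{-R}\sqrt{d^2+\eta_0} = \mathcal{O}(\rho)$, i.e. $R = \mathcal{O}(1 + \log^+(\sqrt{d^2+\eta_0}/\rho))$.

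Finally I would multiply: total iterations $\le R \times (\text{inner iterations per round}) = \mathcal{O}\big((1 + \log^+(\sqrt{d^2+\eta_0}/\rho))(1 + (d^2+\eta_0)/\rho^2)\big)$. To match the stated bound $\mathcal{O}\big((1 + \sqrt{d^2+\eta_0}/\rho)[1 + \log^+(\sqrt{d^2+\eta_0}/\rho)]\big)$, a sharper accounting is needed: I would observe that in the $r$-th outer round the relevant distance bound is not $d$ but shrinks geometrically — after the inner loop of round $r$ stops, $\|z_{k-1}-z_k\|_M \le \rho/2$ provides a warm start, so $\|z_0^{(r+1)} - \bar z_{\mu_{r+1}}\|_M$ is actually $\mathcal{O}(\rho + 2^{-r}\sqrt{d^2+\eta_0})$ rather than $\mathcal{O}(\sqrt{d^2+\eta_0})$, whence the inner iteration count in round $r$ is $\mathcal{O}(1 + 4^{-r}(d^2+\eta_0)/\rho^2)$; summing the geometric series over $r = 0,\dots,R$ gives $\mathcal{O}(R + (d^2+\eta_0)/\rho^2)$, and then using $R = \mathcal{O}(1 + \log^+(\cdot))$ together with the elementary bound $x \le (1+\sqrt{x})(1+\log^+\sqrt{x})$... hmm, actually $(d^2+\eta_0)/\rho^2 = (\sqrt{d^2+\eta_0}/\rho)^2$ is not bounded by $(1+\sqrt{d^2+\eta_0}/\rho)(1+\log^+(\cdot))$ in general. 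So the warm-start refinement must be pushed further: the genuine mechanism, as in \cite{MJR}, is that $z_0$ is \emph{not} reset between rounds, only $\mu$ and $k$ are, so the iterates from the previous round serve as the effective starting point and the distance to the (moving) solution set contracts to $\mathcal{O}(\rho)$ geometrically fast; after the regime $2^{-r}\sqrt{d^2+\eta_0} \lesssim \rho$ is reached (which takes $\mathcal{O}(\log^+(\sqrt{d^2+\eta_0}/\rho))$ rounds), each subsequent inner loop needs only $\mathcal{O}(1)$ iterations, while the total inner iterations over the initial rounds telescope to $\mathcal{O}(\sqrt{d^2+\eta_0}/\rho)$ because the per-round counts $\mathcal{O}(4^{-r}(d^2+\eta_0)/\rho^2)$ at $r$ near the threshold are $\mathcal{O}(1)$ and the dominant single round contributes $\mathcal{O}(\sqrt{d^2+\eta_0}/\rho)$ — I would reproduce the precise bookkeeping of \cite[proof of the pointwise-complexity theorem]{MJR}, which is exactly framework~3 with $\lambda_k=1$, $\varepsilon_k=0$, $(dw)_z(z') = \tfrac12\|z'-z\|_M^2$, so the result transfers verbatim. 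The main obstacle is precisely this last geometric-sum accounting that turns the naive product $\log \cdot (d^2+\eta_0)/\rho^2$ into $(1+\sqrt{d^2+\eta_0}/\rho)(1+\log^+(\cdot))$; everything else (the one-step Lyapunov inequality, the inclusion $Mv_k \in T(\tilde z_k)$, the uniform boundedness of $\|\tilde z_k - z_0\|_M$) is routine HPE machinery.
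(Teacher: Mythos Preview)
Your proposal eventually lands on exactly what the paper does: observe that the DR-HPE framework is framework~3 of \cite{MJR} with $\lambda_k=1$, $\varepsilon_k=0$, $(dw)_z(z')=\tfrac12\|z'-z\|_M^2$, and invoke \cite[Theorem~3.3]{MJR}. The paper's proof is nothing more than that reduction plus the observation that $w=\tfrac12\|\cdot\|_M^2$ is $(1,1)$-regular and $\|M(\cdot)\|_M^*=\|\cdot\|_M$.

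However, the first-principles sketch you attempt before falling back on \cite{MJR} has a real gap. Your per-round inner bound $\mathcal{O}((d^2+\eta_0)/\rho^2)$ is the \emph{unregularized} HPE sublinear rate; it ignores that the operator $T(\cdot)+\mu M(\cdot-z_0)$ is $\mu$-strongly monotone in the $M$-seminorm, which is the whole point of the regularization and is what drives a \emph{linear} inner rate (contraction roughly $1/(1+\mu)$). With that linear rate, round $r$ (where $\mu=2^{-r}$) costs $\mathcal{O}(2^r\log(\cdot))$ inner iterations; summing over $r=0,\dots,R$ with $2^{-R}\approx \rho/\sqrt{d^2+\eta_0}$ gives $\mathcal{O}\big((\sqrt{d^2+\eta_0}/\rho)\log^+(\cdot)\big)$, which is the stated bound. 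Your warm-start story is also off: since $k$ is reset to $1$ at each halving, step~1 reads $z_{k-1}=z_0$, so every outer round restarts from the original $z_0$ and the previous round's iterates are discarded. The correct mechanism is not warm-starting but the $\mu$-dependent linear rate, with the geometric sum over $\mu=1,1/2,\dots$ dominated by the final (smallest) $\mu$.
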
 
\begin{proof}
First of all,  the DR-HPE framework is a special case of  framework~3 in  \cite{MJR} where
$\lambda_k=1$, $\varepsilon_k=0$ and $(dw)_z(z')=(1/2)\|z'-z\|^2_M$ for every $z,z'\in\Z$. Moreover,
it is easy to see that the distance generating function  $w(\cdot)=(1/2)\|\cdot\|^2_M$ is an $(1,1)$-regular with respect to $(\Z,\|\cdot\|_M)$
in the sense of  \cite[ Definition~2.2]{MJR}. Hence, the proof follows directly from \cite[Theorem 3.3]{MJR} (see also 
first remark after \cite[Theorem 3.3]{MJR}) with $M=m=\lambda=1$, $\varepsilon_k=0$, $d_0=d^2/2$, $\tilde r=Mv$
and by taking into account the following property of the dual semi-norm $\|M(\cdot)\|_M^*=\|\cdot\|_M$ (see \cite[ Proposition~A1]{MJR}).
\end{proof}

%%%%%%%%%%
 \section{DR-ADMM and its pointwise iteration-complexity }\label{sec:amal}
%%%%%%%% 

 In this section, we recall the  DR-ADMM for solving \eqref{optl} and establish its  pointwise iteration-complexity result
 for any stepsize $\theta \in (0,(1-\alpha+\sqrt{\alpha^2+6\alpha+5})/2)$, where
 $\alpha $ is a nonnegative proximal factor  associated to  the proximal term added to the second subproblem of the  method.
% Even for this new stepsize domain, we  show that this ADMM variant  is  a special case of the  DR-HPE framework 
%applied to a specific monotone inclusion problem. As a consequence, its  pointwise iteration-complexity result will follows %from 
%Theorem~\ref{th:main}.

%The method that we are interested in studying in this section can be described as follows:
The  DR-ADMM for solving \eqref{optl}  is described as follows:

\vgap
\noindent
\fbox{
\begin{minipage}[h]{6.4 in}
{\bf Dynamic regularized  ADMM (DR-ADMM).}
\\[2mm]
(0) Let an initial point  $(x_0,y_0,\gamma_0) \in \R^n \times \R^p \times \R^{m}$,  positive parameters $\beta$ and $\theta$,  a tolerance  $\rho>0$, a proximal factor $\alpha\geq 0 $, and  matrices $R\in \M^{\R^n}_{+}$ and $S\in \M^{\R^p}_{+}$  be given, and 
set $\mu=1$ and $k=1$;
\\[2mm]
(1)   set 
$\beta_1:=\beta/(\theta+\mu),$ $\beta_2:=\beta(1+\mu),$ $\hat{x}_{k-1}=( x_{k-1}+\mu x_0)/(1+\mu)$ and $\hat{\gamma}_{k-1}:=(\theta \gamma_{k-1}+\mu \gamma_0)/(\theta+\mu) $
and compute  $x_k \in \R^n$ as
\begin{equation} \label{def:tsk-admm}
x_k\in  \displaystyle{\argmin_{x} \left \{ f(x) - \inner{ \hat{\gamma}_{k-1}}{Ax} +
\frac{\beta_1}{2} \| Ax+ By_{k-1}-b \|^2+\frac{1+\mu}{2}\|x-\hat{x}_{k-1}\|_R^2 \right\}};
\end{equation}
(2)  set 
$
\tilde{\gamma}_k:=\hat{\gamma}_{k-1}-\beta_1(Ax_k+By_{k-1}-b),$ 
$\hat{y}_{k-1}:=({y_{k-1}+\mu y_0})/({1+\mu})$ and $
u_k:=\tilde{\gamma}_{k}+\beta_2 (Ax_k+B\hat{y}_{k-1}-b),
$
and compute $(y_k,\gamma_k) \in \R^p \times \R^{m}$  as
{\small
\begin{equation} \label{def:tyk-admm}
y_k\in  \displaystyle{\argmin_{y}} \left \{ g(y) - \inner{ u_{k}}{By} +
\frac{\beta_2}{2}\left[\| Ax_k+By - b \|^2 + \alpha\| B(y-\hat y_{k-1}) \|^2+ \frac1{\beta}{\| y-\hat y_{k-1} \|_{S}^2}\right]\right\},
\end{equation}
}
\begin{equation}\label{admm:eqxk}
\gamma_k := \gamma_{k-1}-\theta\beta\left[Ax_k+By_k-b+{\mu}(\tilde{\gamma}_k-\gamma_0)/{(\beta\theta)}\right]; 
\end{equation}
(3) If
\begin{equation}\label{innerstop}
\left( \norm{\Delta x_k}_R^2+(1+\alpha)\beta \norm{ B\Delta y_k}^2+\norm{ \Delta y_k}_S^2+ (1/({\beta\theta}))\|\Delta \gamma_k\|^2\right)^{1/2} \leq \rho/2,
\end{equation}
where 
\begin{equation}\label{delta}
\Delta x_k:=x_{k-1}-x_k, \quad \Delta y_k:=y_{k-1}-y_k, \quad \Delta \gamma_k:=\gamma_{k-1}-\gamma_k,
\end{equation}
then go to step~4; else set $k \leftarrow k+1$ and go to~step~1;
%\begin{equation}\label{innerstop}
%\left( (1+\alpha)\beta \norm{ B(y_{k-1}-y_k)}^2+ (1/({\beta\theta}))\|\gamma_{k-1}-\gamma_k\|^2\right)^{1/2} \leq \rho/2,
%\end{equation}
  \\[2mm]
(4)  set  $v^x_k:=   \Delta x_k-\mu  (x_k-x_0),$ $v^y_k:=   \Delta y_k-\mu  (y_k-y_0)$ and $v^{\gamma}_k:=\Delta \gamma_k -{\mu} (\tilde{\gamma}_k-\gamma_0)$; if
\begin{equation}\label{crit2}
\left( \norm{ v^x_k}_R^2+(1+\alpha)\beta \norm{ B v^y_k}^2+\norm{  v^y_k}_S^2+ (1/({\beta\theta}))\| v^{\gamma}_k\|^2\right)^{1/2}  \le \rho,
\end{equation}
%\begin{equation}\label{eq:rxry}
 %v^x_k=   \Delta x_k-\mu  (x_k-x_0), \; v^y_k=   \Delta y_k-\mu  (y_k-y_0), \; v^{\gamma}_k=\Delta \gamma_k -{\mu} %(\tilde{\gamma}_k-\gamma_0),
%\end{equation}
then  stop and output $(x,y,\tilde{\gamma},v^x, v^y,v^{\gamma}) \leftarrow (x_k,y_k,\tilde{\gamma}_k, v^x_k,v^y_k,v^{\gamma}_k)$; otherwise,
set $\mu  \leftarrow \mu/2$ and $k=1$, and go to step~1.
\\[2mm]
\noindent
{\bf end}
\end{minipage}
}
\noindent
\\[2mm]
{\bf Remarks.}  1) The DR-ADMM is equivalent to  the DR-ADMM in \cite{MJR} with an appropriate choice of 
linear operator $G$.
It should be noted, however, that  the complexity result presented there does not establish any relationship between the stepsize $\theta$ and 
proximal term defined by  $G$.
%$G:=\kappa C^*C+S$, where $\kappa\geq 0$. However,  the complexity result presented there does not establish any %relationship between the stepsize $\theta$ and 
%proximal term defined by  $G$.
%we emphasize that the complexity result  obtained here does not follows trivially from the analysis in  \cite{MJR} by chossing %$G$ as above.
   2) As in the DR-HPE framework, the scalar $\mu$ in the DR-ADMM can be seen  as a regularization parameter. 
%Moreover,
%although the parameter  $\mu$ is required to be positive, it is worth mentioning that if $\mu=0$ in the initial step, then the DR-%ADMM  reduces to the  proximal ADMM  with termination criterion \eqref{innerstop} added (in this case,  \eqref{crit2} is %redundant). 
3)  Suitable choices of $R$ and $S$ may becomes the subproblems  \eqref{def:tsk-admm} and \eqref{def:tyk-admm}   easier to solve or even have a closed-form solutions (see   \cite{HeLinear,Wang2012,Yang_linearizedaugmented} for more details).
4) For  convenience, the term ``cycle" will be used to refer to an  execution of steps~1-3 of the DR-ADMM  with a fixed~$\mu$.

%Some comments about the dynamic regularized ADMM are in order. 
%First, although  the above method  requires $\mu>0$,  it is worth pointing out that if $\mu=0$,  
%it becomes  the  ADMM with the termination criterion \eqref{innerstop} added.
%For a fixed $\mu$, an  execution of steps~1-3 of the dynamic regularized M-ADMM  is called  a cycle.
%The implementation of a cycle  has no relevant extra computational cost when compared to the  M-ADMM and, as we will show  later, the %pointwise iteration-complexity bound of the overall method  is substantially better than the  M-ADMM.

In what follows,  we  show that the DR-ADMM  with  $\theta \in (0,(1-\alpha+\sqrt{\alpha^2+6\alpha+5})/2)$  is still a special case of the  DR-HPE framework 
applied to a specific monotone inclusion problem. As a consequence, its  pointwise iteration-complexity result will follows from 
Theorem~\ref{th:main}.

%In what follows, we  show that the DR-ADMM is  an instance of the  DR-HPE framework applied to a specific  monotone %inclusion problem and,  as a by-product, we derive its pointwise iteration-complexity result.

Let us first deduce the aforementioned monotone inclusion problem.
 It is well known that  a  pair $(\bar{x},\bar{y})$  is a solution  of \eqref{optl} and $\bar{\gamma}$ is  an associated Lagrange multiplier if and only if 
  $(\bar{x},\bar{y},\bar{\gamma})$ satisfies
 \[
0 \in   \partial f(\bar{x})- A^*{\bar{\gamma}}, \quad 0 \in \partial g(\bar{y})- B^* {\bar{\gamma}}, \quad A\bar{x}+B\bar{y}=b.
 \]
 Since it is assumed that the solution set of \eqref{optl} is nonempty, 
  the existence of the Lagrange multipliers for problem \eqref{optl} is guaranteed; see, for example, \cite[Corollary~28.2.2]{Rockafellar70}. 
Hence,  we may   solve  \eqref{optl} by means of obtaining  a triple $(\bar{x},\bar{y},\bar{\gamma})$ satisfying the following monotone inclusion problem
\begin{equation} \label{FAB}
0\in T(x,y,\gamma) := \left[ \begin{array}{c} \partial f(x)- A^*{\gamma}\\ \partial g(y)- B^* {\gamma}\\ Ax+By-b
\end{array} \right].
\end{equation}
In order to analyze  the DR-ADMM in the setting of Section~\ref{sec:bas}, 
 consider the vector space $\Z:=\R^{n}\times\R^{p}\times \R^{m}$  and the following linear operator  
\begin{equation}\label{seminorm}
Q:= \left( \begin{array}{ccc} R& 0& 0\\ 0& (1+\alpha)\beta B^*B+S& 0 \\ 0& 0& ({\theta\beta})^{-1}I
\end{array} \right)  :\Z \to \Z
\end{equation}
where $I$ is the $m \times m$ identity operator. 
We assume that the set $\bar{Z}_{\mu}(Q)$ as defined in \eqref{defZu} with $z_0=(x_0,y_0,\lambda_0)$,  $T$ and $Q$  as in \eqref{FAB} and \eqref{seminorm}, respectively,  is nonempty for every $\mu>0$. We mention that this assumption is not restrictive. Indeed, it is easy to see that a triple
 $(x,y,\gamma)\in \bar{Z}_{\mu}(Q)$ if and only if  $(x,y,\gamma)$  satisfies the inclusions
\begin{align*}
&0\in\partial f(x)-A^*{\gamma}+\mu R(x-x_0),  \qquad
0\in  \partial g(y)-B^*{\gamma}+\mu [(1+\alpha)\beta  B^*B(y-y_0)+S(y-y_0)], \\
& 0=Ax+By-b+\mu({\gamma}-{\gamma}_0)(\beta\theta)^{-1},
\end{align*}
which is  equivalent to  the pair $(x,y)$ be a solution and $\gamma$ an associated Lagrange multiplier of the following optimization  problem
\begin{equation*}
\min_{(x,y,u)} \left\{f(x)+g(y)+\frac{\mu}{2}\|\left(x-x_0,y-y_0,u(\theta{\beta}/{\mu})+\gamma_0\right)\|_Q^2: Ax+By+u=b\right\}.  
\end{equation*}
Therefore, any classical condition guaranteeing solution of the above problem implies that $\bar{Z}_\mu(Q)$ is nonempty. For instance,  coerciviness of $f$ and $g$, or  positive definiteness of $R$ and $S$ and injectiveness of $B$ (which is equivalent to $Q$ be definite positive).

The next result  shows that  the  DR-ADMM generates
 a suitable  pair $( z_k,\tilde z_k)$ satisfying the inclusion~\eqref{eq:ec.2}
with  $T$   as  in \eqref{FAB} and $M=Q$, where $Q$ is   as  in  \eqref{seminorm}.
%The proof of this result is similar to the correspondent one \cite[Lemma 4.2]{MJR}, we include it here just for sake of %completeness.

\begin{proposition} \label{pr:aux}
Let   $\{(x_k,y_k,\gamma_k, \tilde{\gamma}_k)\}$ be  the kth iterate of a cycle of the  DR-ADMM and let $\{(\Delta x_k,\Delta y_k,\Delta \gamma_k)\}$ be as in \eqref{delta}.  Then,
\begin{equation} \label{aux.0}
Q \left( \begin{array}{c} \Delta x_k\\ \Delta y_k\\ \Delta \gamma_k
\end{array} \right) \in \left( \begin{array}{c} \partial f(x_k)-A^*\tilde{\gamma}_k\\\partial g(y_k)-B^*\tilde{\gamma}_k\\Ax_k+By_k-b
\end{array} \right)+ \mu Q\left( \begin{array}{c} x_k-x_{0}\\ y_k-y_{0}\\ \tilde{\gamma}_k-{\gamma}_{0}
\end{array} \right)
\end{equation}
where  $Q$  is as in \eqref{seminorm}. As a consequence, 
 $z_{k}=(x_{k},y_{k},\gamma_{k})$  and  $\tilde{z}_k=({x}_k,y_k, \tilde{\gamma}_k)$  satisfy the inclusion~\eqref{eq:ec.2}
with $M=Q$ and  $T$   as  in \eqref{FAB}.
\end{proposition}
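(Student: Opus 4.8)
The plan is to produce the three components of~\eqref{aux.0} one at a time, by writing the first-order optimality condition of each of the two subproblems~\eqref{def:tsk-admm} and~\eqref{def:tyk-admm} and of the multiplier update~\eqref{admm:eqxk}, and then to recognise the stacked inclusion as exactly~\eqref{eq:ec.2} evaluated at $z_{k}=(x_{k},y_{k},\gamma_{k})$, $\tilde z_k=(x_k,y_k,\tilde\gamma_k)$ and $M=Q$. Since $f$ and $g$ are finite-valued (hence continuous) convex functions, the subdifferential sum rule applies to both subproblems without a constraint qualification.

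First I would treat the $x$-component. The optimality condition of~\eqref{def:tsk-admm} is $0\in\partial f(x_k)-A^*\hat\gamma_{k-1}+\beta_1 A^*(Ax_k+By_{k-1}-b)+(1+\mu)R(x_k-\hat x_{k-1})$. The definition of $\tilde\gamma_k$ in step~2 gives $\beta_1(Ax_k+By_{k-1}-b)=\hat\gamma_{k-1}-\tilde\gamma_k$, so the $\hat\gamma_{k-1}$ terms cancel; then, using $\hat x_{k-1}=(x_{k-1}+\mu x_0)/(1+\mu)$ to rewrite $(1+\mu)(x_k-\hat x_{k-1})=-\Delta x_k+\mu(x_k-x_0)$, I obtain $R\Delta x_k\in\partial f(x_k)-A^*\tilde\gamma_k+\mu R(x_k-x_0)$, which is the first row of~\eqref{aux.0}.

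Next, the $y$-component, which is the most computational step. From~\eqref{def:tyk-admm}, $0\in\partial g(y_k)-B^*u_k+\beta_2 B^*(Ax_k+By_k-b)+\alpha\beta_2 B^*B(y_k-\hat y_{k-1})+(\beta_2/\beta)S(y_k-\hat y_{k-1})$. Substituting $u_k=\tilde\gamma_k+\beta_2(Ax_k+B\hat y_{k-1}-b)$, the two terms carrying $\beta_2 B^*(\cdot)$ collapse to $\beta_2 B^*B(y_k-\hat y_{k-1})$, which absorbs the $\alpha$- and $S$-terms into $\big[(1+\alpha)\beta_2 B^*B+(\beta_2/\beta)S\big](y_k-\hat y_{k-1})$. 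Since $\beta_2=\beta(1+\mu)$ and $(1+\mu)(y_k-\hat y_{k-1})=-\Delta y_k+\mu(y_k-y_0)$, this equals $\big[(1+\alpha)\beta B^*B+S\big]\big(-\Delta y_k+\mu(y_k-y_0)\big)$; as $(1+\alpha)\beta B^*B+S$ is the $(2,2)$-block of $Q$ in~\eqref{seminorm}, this yields the second row of~\eqref{aux.0}.

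Finally the $\gamma$-component needs no subdifferential: rearranging~\eqref{admm:eqxk} gives $\Delta\gamma_k=\theta\beta(Ax_k+By_k-b)+\mu(\tilde\gamma_k-\gamma_0)$, and dividing by $\theta\beta$ gives $(\theta\beta)^{-1}\Delta\gamma_k=Ax_k+By_k-b+\mu(\theta\beta)^{-1}(\tilde\gamma_k-\gamma_0)$, the third row of~\eqref{aux.0} since the $(3,3)$-block of $Q$ is $(\theta\beta)^{-1}I$. Because $Q$ is block diagonal, stacking the three rows is~\eqref{aux.0} verbatim; and since $z_{k-1}-z_k=(\Delta x_k,\Delta y_k,\Delta\gamma_k)$ and the right-hand side of~\eqref{aux.0} is precisely $T(\tilde z_k)+\mu Q(\tilde z_k-z_0)$ with $T$ as in~\eqref{FAB}, \eqref{aux.0} is exactly~\eqref{eq:ec.2} with $M=Q$, which gives the ``as a consequence'' claim. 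I expect no conceptual obstacle; the only thing to watch is the bookkeeping of the constants $\beta_1,\beta_2,\theta,\mu$ and the recurring factor $1+\mu$, so that after substitution all coefficients reassemble exactly into $Q$ and $\mu Q$ — the $y$-update is where this is most delicate, because the cross term coming from $u_k$ must cancel the quadratic penalty and leave behind precisely $\beta_2 B^*B(y_k-\hat y_{k-1})$.
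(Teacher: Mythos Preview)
Your proposal is correct and follows essentially the same route as the paper: derive the optimality condition of each subproblem, substitute the definitions of $\tilde\gamma_k$, $u_k$, $\hat x_{k-1}$, $\hat y_{k-1}$ and $\beta_2$ to collapse the coefficients into the blocks of $Q$, read off the $\gamma$-row from~\eqref{admm:eqxk}, and stack. The bookkeeping you flag in the $y$-update is exactly the point the paper handles (the identity $(1+\mu)\alpha\beta+\beta_2=(1+\alpha)\beta_2$ followed by $\beta_2=(1+\mu)\beta$), and your remark on the subdifferential sum rule for finite-valued convex $f,g$ is a harmless clarification.
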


\begin{proof}
From the optimality condition for \eqref{def:tsk-admm} and definitions of $\tilde{\gamma}_k$ and $\hat{x}_{k-1}$, we have
\begin{align}\label{aux.1}
0 &\in \partial f(x_k)-A^*\left(\hat{\gamma}_{k-1}-\beta_1(Ax_k+By_{k-1}-b)\right)+(1+\mu)R(x_k-\hat{x}_{k-1})\nonumber\\
&= \partial f(x_k)-A^*\tilde{\gamma}_k+R(x_k-x_{k-1})+\mu R(x_k-x_0).
\end{align}
Now, from the optimality condition for \eqref{def:tyk-admm} and definition of $u_k$, we obtain
\begin{align}
0 &\in \partial g(y_k)-B^*(u_k-\beta_2( Ax_k+By_k-b))+(1+\mu)\alpha\beta B^*B(y_k-\hat y_{k-1})+(\beta_2/\beta)S(y_k-\hat{y}_{k-1})\nonumber\\
&= \partial g(y_k)-B^*\tilde{\gamma}_{k}+[(1+\mu)\alpha\beta+\beta_2] B^*B(y_k-\hat{y}_{k-1})+(\beta_2/\beta)S(y_k-\hat{y}_{k-1})\nonumber \\
&=[(1+\alpha)\beta B^*B+S](y_k-y_{k-1})+\partial g(y_k)-B^*\tilde{\gamma}_k+\mu[(1+\alpha)\beta  B^*B+S](y_k-y_0) \label{aux.2}
\end{align}
where the last equality is due to  definitions of  $\beta_2$ and $\hat{y}_{k-1}$.
On the other hand, definition of ${\gamma}_k$ in \eqref{admm:eqxk} implies that
\[
0=(\gamma_k - \gamma_{k-1})/{(\beta\theta)}+Ax_k+By_k-b+{\mu}(\tilde{\gamma}_k-\gamma_0)/{(\beta\theta)}.
\]
Hence, the inclusion \eqref{aux.0} follows from 
 the last equality, \eqref{aux.1}, \eqref{aux.2} and definitions in \eqref{delta} and \eqref{seminorm}. 

The second part of the proposition follows immediately  from   \eqref{aux.0} and definitions of $z_k$, $\tilde z_k$, $M$ and~$T$.
\end{proof}
%%%%%%%%%%%%%%%%%%%%
%In the rest of our analysis, the following quantity will be needed: 
%\begin{equation}\label{def:d0admm}
%d_0:=\inf \left\{\|(x_0,y_0,\gamma_0)-(x,y,\gamma)\|_Q : (x,y,\gamma) \;\mbox{is solution of}\, \eqref{FAB} \right\}.
%\end{equation}
%%%%%%%%%
The following lemma   describes some important properties 
of the sequences generated  during a cycle  of the  DR-ADMM.
%%%%%%%%%%%%%
%%%%%%%%%%%%%%
\begin{lemma}\label{lem:deltak}
Let   $\{(x_k,y_k,\gamma_k, \tilde{\gamma}_k)\}$ be  the kth iterate of a cycle of the  DR-ADMM and let $\{(\Delta x_k,\Delta y_k,\Delta \gamma_k)\}$ be as in \eqref{delta}. Then, the following statements hold:
\\[2mm]
(a) $\tilde{\gamma}_k-\gamma_{k-1}=-\beta B\Delta y_k-\Delta \gamma_k/\theta$;
\\[2mm]
(b)  if   $k=1$ and $\theta \in[1,2)$, then
\[
\frac{1}{{\theta}}\langle B\Delta y_1,\Delta \gamma_1 \rangle\geq\frac{1}{2}\| \Delta y_1\|_{\alpha\beta B^*B+S}^2- \frac{2{\theta}d_0}{2-\theta}
\]
where $d_0:=\inf \left\{\|(x_0,y_0,\gamma_0)-(x,y,\gamma)\|_Q : (x,y,\gamma) \;\mbox{is solution of}\; \eqref{FAB} \right\}$;
\\[2mm]
(c) if   $k\geq 2$, then  \[2\langle B\Delta y_k,\Delta \gamma_k\rangle\geq 2(1-\theta)\inner{B\Delta y_k}{\Delta \gamma_{k-1}}+\theta \| \Delta y_k\|_{\alpha\beta B^*B+S}^2-\theta \| \Delta y_{k-1}\|_{\alpha\beta B^*B+S}^2.\]
\end{lemma}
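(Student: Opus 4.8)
The plan is to establish the three items in order, with item~(a) serving as the algebraic identity that feeds both~(b) and~(c); throughout I would write $\Delta x_k,\Delta y_k,\Delta\gamma_k$ as in~\eqref{delta} and use freely the optimality relations~\eqref{aux.1}--\eqref{aux.2} and the inclusion~\eqref{aux.0} of Proposition~\ref{pr:aux}, the monotonicity of $T$, $\partial f$, $\partial g$, and the relation~\eqref{reldist}.

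For item~(a) I would argue by pure substitution in the recursions of steps~1--2 of the DR-ADMM and in~\eqref{admm:eqxk}. Setting $p_k:=Ax_k+By_{k-1}-b$ and $q_k:=Ax_k+By_k-b$, so that $p_k-q_k=B\Delta y_k$, I would use $\tilde\gamma_k=\hat\gamma_{k-1}-\beta_1 p_k$ with $\beta_1=\beta/(\theta+\mu)$ and $\hat\gamma_{k-1}=(\theta\gamma_{k-1}+\mu\gamma_0)/(\theta+\mu)$ to solve for $\beta p_k$, and~\eqref{admm:eqxk}, rewritten as $\Delta\gamma_k=\theta\beta q_k+\mu(\tilde\gamma_k-\gamma_0)$, to solve for $\beta q_k$. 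Forming $\beta B\Delta y_k=\beta p_k-\beta q_k$ and simplifying then cancels the $\gamma_0$-terms and the $\mu(\tilde\gamma_k-\gamma_0)$-contribution and leaves exactly $\tilde\gamma_k-\gamma_{k-1}=-\beta B\Delta y_k-\Delta\gamma_k/\theta$; the only care needed is to carry the coefficients $\beta_1$ and $\hat\gamma_{k-1}$ correctly.

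For item~(c) I would invoke the monotonicity of $\partial g$ on two consecutive $y$-subproblems. By~\eqref{aux.2}, the vector $\eta_k:=B^*\tilde\gamma_k-[(1+\alpha)\beta B^*B+S](y_k-y_{k-1})-\mu[(1+\alpha)\beta B^*B+S](y_k-y_0)$ lies in $\partial g(y_k)$; writing the analogous $\eta_{k-1}\in\partial g(y_{k-1})$, using $\langle y_k-y_{k-1},\eta_k-\eta_{k-1}\rangle\ge0$ and substituting $y_k-y_{k-1}=-\Delta y_k$, $y_{k-1}-y_{k-2}=-\Delta y_{k-1}$, I would reach an inequality whose leading term is $-\langle B\Delta y_k,\tilde\gamma_k-\tilde\gamma_{k-1}\rangle$ together with quadratic forms in $\Delta y_k,\Delta y_{k-1}$ relative to $(1+\alpha)\beta B^*B+S$. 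I would then substitute item~(a) at the indices $k$ and $k-1$ to express $\tilde\gamma_k-\tilde\gamma_{k-1}$ through $\Delta\gamma_k,\Delta\gamma_{k-1},B\Delta y_k,B\Delta y_{k-1}$; split $(1+\alpha)\beta B^*B+S=\beta B^*B+(\alpha\beta B^*B+S)$ so that the $\beta B^*B$ pieces cancel against the $\beta\norm{B\Delta y_k}^2$ coming out of~(a); bound the surviving cross term $\langle(\alpha\beta B^*B+S)\Delta y_k,\Delta y_{k-1}\rangle$ by $\tfrac12\norm{\Delta y_k}^2_{\alpha\beta B^*B+S}+\tfrac12\norm{\Delta y_{k-1}}^2_{\alpha\beta B^*B+S}$; and discard the nonnegative multiple of $\mu\norm{B\Delta y_k}^2$. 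Rearranging the result gives the inequality in~(c).

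For item~(b), the delicate one, I would combine item~(a) at $k=1$ --- where $\gamma_{k-1}=\gamma_0$, so $\tilde\gamma_1-\gamma_0=-\beta B\Delta y_1-\Delta\gamma_1/\theta$ --- with the inclusion~\eqref{eq:ec.2} of Proposition~\ref{pr:aux} at $k=1$, since~(a) alone cannot yield~(b) (its left-hand side carries a negatively signed $\beta\norm{B\Delta y_1}^2$). From~\eqref{eq:ec.2} there is $s_1\in T(\tilde z_1)$ with $s_1=Q(z_0-z_1)-\mu Q(\tilde z_1-z_0)$; pairing $s_1$ against $\tilde z_1-\bar z_\mu$, where $\bar z_\mu\in\bar Z_\mu(Q)$ satisfies $-\mu Q(\bar z_\mu-z_0)\in T(\bar z_\mu)$, and using the monotonicity of $T$ would give $\langle\tilde z_1-\bar z_\mu,Q(z_0-z_1)\rangle\ge\mu\norm{\tilde z_1-\bar z_\mu}^2_Q\ge0$. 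Estimating the left-hand side by Cauchy--Schwarz against $\norm{z_0-\bar z_\mu}_Q$, bounding $\norm{z_0-\bar z_\mu}_Q$ in terms of $d_0$ via~\eqref{reldist} and the infimum defining $d_0$, expanding the $\gamma$-component of $z_0-z_1$ through~(a), and applying Young's inequality with the parameter chosen so that the coefficient of $\norm{B\Delta y_1}^2$ is exactly absorbed would produce the stated bound; this last step is where the factor $2\theta/(2-\theta)$ --- hence the restriction $\theta<2$ --- appears, and where the hypothesis $\theta\ge1$ keeps an intermediate coefficient nonnegative. I expect this balancing, together with careful tracking of the signed terms, to be the main obstacle, since items~(a) and~(c) are essentially bookkeeping once the right combinations have been identified.
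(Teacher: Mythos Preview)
Your plans for items~(a) and~(c) are essentially the paper's proofs. For~(a) the paper also proceeds by substitution, rewriting the $\gamma$-update via $\tilde\gamma_k$ and $\hat\gamma_{k-1}$ to obtain $\gamma_k=(1-\theta)\gamma_{k-1}+\theta\tilde\gamma_k-\theta\beta B(y_k-y_{k-1})$, which rearranges to~(a); your $p_k,q_k$ organisation is an equivalent bookkeeping. For~(c) the paper uses the monotonicity of $\partial g_{\mu,\beta}(\cdot):=\partial g(\cdot)+\mu[(1+\alpha)\beta B^*B+S](\cdot-y_0)$ rather than $\partial g$, which amounts to absorbing your discarded $\mu$-term into the operator; after substituting~(a) and the same Cauchy--Schwarz bound on the cross term you reach exactly the paper's inequality.

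Your plan for~(b), however, has a genuine gap. You correctly derive the monotonicity consequence $\langle\tilde z_1-\bar z_\mu,Q(z_0-z_1)\rangle\geq 0$ (the paper uses the same inequality), but the subsequent steps you describe do not lead to the target bound. Cauchy--Schwarz on this inner product gives a factor $\norm{\tilde z_1-\bar z_\mu}_Q\norm{z_0-z_1}_Q$, neither of which is $\norm{z_0-\bar z_\mu}_Q$; and the $\gamma$-component of $z_0-z_1$ is simply $\Delta\gamma_1$, so~(a) does not ``expand'' it. There is no evident Young-parameter choice here that produces the constant $2\theta/(2-\theta)$.

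The paper's route for~(b) is different in structure. First, by a crude triangle-inequality argument (using $\theta\geq 1$ and the block form of $Q$) it bounds
\[
\tfrac12\|\Delta y_1\|_{\alpha\beta B^*B+S}^2-\tfrac{1}{\theta}\langle B\Delta y_1,\Delta\gamma_1\rangle\;\leq\;\|z_1-\bar z_\mu\|_Q^2+\|z_0-\bar z_\mu\|_Q^2.
\]
Second, it inserts the monotonicity inequality into the three-point identity $\|z_1-\bar z_\mu\|_Q^2=\|z_0-\bar z_\mu\|_Q^2+\|z_1-\tilde z_1\|_Q^2-\|z_0-\tilde z_1\|_Q^2+2\langle Q(z_1-z_0),\tilde z_1-\bar z_\mu\rangle$, and then uses~(a) to estimate $\|z_1-\tilde z_1\|_Q^2-\|z_0-\tilde z_1\|_Q^2\leq \frac{\theta-1}{\beta\theta^2}\|\Delta\gamma_1\|^2\leq \frac{2(\theta-1)}{\theta}\big(\|z_0-\bar z_\mu\|_Q^2+\|z_1-\bar z_\mu\|_Q^2\big)$. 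This yields a \emph{self-referencing} inequality in $\|z_1-\bar z_\mu\|_Q^2$ that can be solved (precisely when $\theta<2$) to give $\|z_1-\bar z_\mu\|_Q^2\leq\frac{3\theta-2}{2-\theta}\|z_0-\bar z_\mu\|_Q^2$; combining with the first display and~\eqref{reldist} produces the factor $\frac{2\theta}{2-\theta}$. Your outline is missing both the initial crude bound and this self-referencing step, and a single Cauchy--Schwarz/Young application will not manufacture them.
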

\begin{proof}
(a)  Definitions of $\gamma_k$, $\tilde{\gamma}_{k-1}$ and $\beta_1$ in the DR-ADMM imply that 
\begin{align*}
\gamma_k&=\gamma_{k-1}-\mu(\tilde{\gamma}_k-{\gamma}_0)  -\theta\beta(Ax_k+By_{k-1}-b)-\theta\beta B(y_k-y_{k-1}) 
\\[2mm]
&=\gamma_{k-1}-\mu(\tilde{\gamma}_k-{\gamma}_0)+  (\theta+\mu)(\tilde\gamma_k-\hat\gamma_{k-1})-\theta\beta B(y_k-y_{k-1}) 
\\[2mm]
&=(1-\theta)\gamma_{k-1}+\theta\tilde{\gamma}_k -\theta\beta B(y_k-y_{k-1}) 
\end{align*}
where the last equality is due to definition of  $\hat{\gamma}_k$. Hence, item (a) follows by simple calculus and \eqref{delta}.

(b) Let  a point  $\bar{z}_\mu:=(\bar{x}_{\mu},\bar{y}_{\mu},\bar{\gamma}_{\mu})\in \bar{Z}_{\mu}(Q)$ (see the assumption following \eqref{seminorm}) and define 
\begin{equation}\label{def:a34}
\tilde{z}_1=(x_1,y_1,\tilde{\gamma}_1) \quad \mbox{and} \quad z_k=(x_{k},y_{k},\gamma_{k}),  \; k=0,1. 
\end{equation}
Using  \eqref{delta}, the fact that $-2\inner{a}{b}\leq \|a\|^2+\|b\|^2$ $\forall  a,b\in \R^m$, and $\theta\geq1,$ we obtain
\begin{align*}
\frac{1}{2}\|\Delta y_1\|_{\alpha\beta B^*B+S}^2-\frac{1}{{\theta}}\langle B\Delta y_1,\Delta \gamma_1 \rangle &\leq 
\frac{1}{2}\left((1+\alpha)\beta\|B(y_1-y_0)\|^2+\| y_1-y_0\|_{S}^2+\frac{1}{\beta\theta} \|\gamma_1-\gamma_{0}\|^2\right)\\
&\leq
(1+\alpha)\beta\left(\| B(y_{1}-\bar{y}_{\mu})\|^2 +
\| B(y_{0}-\bar{y}_{\mu})\|^2\right)+\| y_1-\bar{y}_{\mu})\|_{S}^2 \\
&+\| y_0-\bar{y}_{\mu})\|_{S}^2+\frac{1}{\beta\theta}\|\gamma_1-\bar{\gamma}_{\mu}\|^2
+\frac{1}{\beta\theta}\| \gamma_0-\bar{\gamma}_{\mu}\|^2
\end{align*}
which, combined with \eqref{seminorm}, yields
\begin{equation}\label{eq:q34}
\frac{1}{2}\|\Delta y_1\|_{\alpha\beta B^*B+S}^2-\frac{1}{{\theta}}\langle B\Delta y_1,\Delta \gamma_1 \rangle \leq 
 \|z_1-\bar{z}_\mu\|^2_{Q} +\|z_0-\bar{z}_\mu\|^2_{Q}.
\end{equation}
 On the other hand, note that 
\begin{equation}\label{a456}
\|z_1-\bar{z}_\mu\|_Q^2=\|z_0-\bar{z}_\mu\|_Q^2+\|z_1-\tilde{z}_1\|_Q^2-\|z_0-\tilde{z}_1\|_Q^2+2 \inner{Q(z_1-{z}_0)}{\tilde{z}_1-\bar{z}_\mu}.
\end{equation}
As $0 \in T(\bar{z}_\mu)+\mu Q(\bar{z}_\mu-z_0)$ and  $Q(z_{0}-{z_1})  \in ( T(\tz_1)+\mu Q ({\tz}_1-{z_0}))$  (see Proposition~\ref{pr:aux} with $k=1$), we have $ \inner{Q(z_1-{z}_0)}{\tilde{z}_1-\bar{z}_\mu}\leq 0$. This inequality together with  \eqref{a456} imply that
\begin{equation}\label{a4562}
\|z_1-\bar{z}_\mu\|_Q^2\leq \|z_0-\bar{z}_\mu\|_Q^2+\|z_1-\tilde{z}_1\|_Q^2-\|z_0-\tilde{z}_1\|_Q^2.
\end{equation}
Now, using the definitions in \eqref{seminorm} and \eqref{def:a34}, we have
\begin{align*}
 \|z_1-\tilde{z}_1\|_Q^2-\|z_0-\tilde{z}_1\|_Q^2&\leq\frac{1}{\beta\theta}\|\gamma_1-\tilde{\gamma}_1\|^2-{\beta}\|B(y_1-y_0)\|^2-\frac{1}{\beta\theta}\|\tilde{\gamma}_1-\gamma_0\|^2\\
& =\frac{(\theta-2)}{\beta\theta^2}\|\gamma_1-{\gamma}_0\|^2-\frac{2}{\theta}\langle B(y_{1}-y_{0}),\gamma_1-\gamma_{0} \rangle-{\beta}\|B(y_1-y_0)\|^2\\
&=\frac{(\theta-1)}{\beta\theta^2}\|\gamma_1-{\gamma}_0\|^2- \left\|B(y_1-y_0)+\frac{\gamma_1-\gamma_{0}}{\theta} \right\|^2, 
\end{align*}
where the first equality  is due to item (a) with $k=1$. Therefore,
\begin{align*}
 \|z_1-\tilde{z}_1\|_Q^2-\|z_0-\tilde{z}_1\|_Q^2&\leq\frac{(\theta-1)}{\beta\theta^2}\|\gamma_1-{\gamma}_0\|^2\leq \frac{2(\theta-1)}{\theta}\left( \frac{\|\gamma_1-\bar{\gamma}_{\mu}\|^2}{\beta\theta}+\frac{\|\gamma_0-\bar{\gamma}_{\mu}\|^2}{\beta\theta}\right)\\
&\leq \frac{2(\theta-1)}{\theta} \left(\|z_0-\bar{z}_\mu\|_Q^2+\|z_1-\bar{z}_\mu\|_Q^2\right)
\end{align*}
where   the second inequality is due to  the fact that $2\inner{a}{b}\leq \|a\|^2+\|b\|^2$ for all $ a,b\in \R^m$, and the last inequality is due to \eqref{seminorm} and definitions of $z_0,z_1$ and $\bar{z}_\mu$.
Hence, combining the last estimative with \eqref{a4562},   we obtain 
$$
\|z_1-\bar{z}_\mu\|_Q^2\leq \frac{\theta}{2-\theta}\left(1+\frac{2(\theta-1)}{\theta}\right)\|z_0-\bar{z}_\mu\|_Q^2=\frac{3\theta-2}{2-\theta}\|z_0-\bar{z}_\mu\|_Q^2.
$$ 
Therefore, statement (b) follows from \eqref{eq:q34}, the last inequality, \eqref{reldist} with $M=Q$, and the definition of $d_0$.

(c)  From  \eqref{aux.0} and definitions    in \eqref{delta} and \eqref{seminorm}, we obtain
$$
 B^*(\tilde{\gamma}_j-(1+\alpha)\beta B(y_j-y_{j-1}))-S(y_j-y_{j-1}) \in \partial g_{\mu,\beta}(y_j)\qquad  \forall  j\geq1,
$$
where $g_{\mu,\beta}(y):=g(y) + (\mu/2)\|y-y_0\|_{(1+\alpha)\beta B^*B+S}^2$  for every $y\in \R^p$. Hence, using item~(a), we have
\[
(1/\theta) B^*(\gamma_j-(1-\theta)\gamma_{j-1})-(\alpha\beta B^* B+S)(y_j-y_{j-1})\in \partial g_{\mu,\beta}(y_j) \qquad \forall j\geq1.
\]
Using \eqref{delta} and the previous inclusion for $j=k-1$ and $j=k$, it follows from the monotonicity of the subdifferential of $g_{\mu,\beta}$  that 
\begin{align*}
0&\leq  \langle  B^*\Delta\gamma_k,\Delta y_{k}\rangle-{(1-\theta)}\langle  B^*\Delta \gamma_{k-1},\Delta y_{k}\rangle-   
\theta\| \Delta y_k\|_{\alpha\beta B^*B+S}^2+\theta\inner{(\alpha\beta B^*B+S)\Delta y_{k-1}}{\Delta y_{k}}
\end{align*}
which, combined with the fact that $2\inner{(\alpha\beta B^*B+S)\Delta y_{k-1}}{\Delta y_{k}}\leq \| \Delta y_k\|_{\alpha\beta B^*B+S}^2+\| \Delta y_{k-1}\|_{\alpha\beta B^*B+S}^2$,   yields item (c).
\end{proof}
%%%%%%%

In the next lemma, we establish a technical result which will be used in order to prove that the DR-ADMM with $\theta \in [1,(1-\alpha+\sqrt{\alpha^2+6\alpha+5})/2)$  is a special case of the 
DR-HPE framework.

%%%%%%%%
\begin{lemma}\label{pro:sigma} 
Assume that  $\theta \in [1,(1-\alpha+\sqrt{\alpha^2+6\alpha+5})/2)$.  Then, there exists a
 parameter  $\bar\tau\in (0,1/2)$ such that  
\begin{equation}
\bar\sigma:=\left(\frac{b+\sqrt{b^2-4ac}}{2a}\right)\in (0,1),
\end{equation}
where $a:=(1-\bar\tau)(1+\alpha)(1+\theta)-\alpha-(1-\theta)^2$, $c:=\left[1-\bar\tau-\alpha \bar\tau(1-\theta)-(1-\theta)^2\right](1-\theta)^2$ and $b:=[(1-\bar\tau)(1+\alpha)(1+\theta)-\alpha-2(1-\theta)](1-\theta)^2-\alpha\bar\tau(1-\theta)+1-\bar\tau.$
%\begin{align*}
%a&:=(1-\bar\tau)(1+\alpha)(1+\theta)-\alpha-(1-\theta)^2, \qquad c:=\left[1-\bar\tau-\alpha \bar\tau(1-\theta)-(1-\theta)^2\right]%(1-\theta)^2,\\
%b&:=[(1-\bar\tau)(1+\alpha)(1+\theta)-\alpha-2(1-\theta)](1-\theta)^2-\alpha\bar\tau(1-\theta)+1-\bar\tau.
%\end{align*}
Moreover,  
\begin{equation}\label{eq:hj34}
\max\left\{(1-\theta)^2,\frac{\bar\tau(\theta-1)}{(1-\bar\tau)\theta-\bar\tau} ,\frac{{1-\bar\tau[1+\alpha(1-\theta)]}}{(1-\tau)(1+\alpha)(1+\theta)-\alpha}\right\}\leq \bar\sigma,
\end{equation}
and  the matrix   
\begin{equation} \label{matrixtheta1>1}
G(\sigma)= \left[
\begin{array}{cc} 
 (1-\bar\tau)[\sigma(1+\theta)-1]+\alpha[{\theta\sigma-\bar\tau(\sigma+ \theta + \sigma \theta -1)}]& (\sigma+\theta-1)(1-\theta) \\[2mm]
(\sigma+\theta-1)(1-\theta)&  \sigma-(1-\theta)^2  \\[2mm]
\end{array} \right]
\end{equation}
is  positive semidefinite for $\sigma=\bar \sigma$.
\end{lemma}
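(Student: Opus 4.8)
The plan is to treat $\bar\tau$ as a free parameter in $[0,1/2)$, verify all the requested properties first at $\bar\tau=0$ (with the inequalities strict wherever a passage to the limit will be needed), and then transfer them to a small $\bar\tau>0$ by continuity of the rational dependence on $\bar\tau$. The algebraic backbone, which I would establish first by a direct expansion, is the identity $\det G(\sigma)=a\sigma^2-b\sigma+c$, obtained after writing the entries of $G(\sigma)$ as $G_{11}(\sigma)=A\sigma-R$, $G_{22}(\sigma)=\sigma-(1-\theta)^2$ and $G_{12}(\sigma)=(\sigma+\theta-1)(1-\theta)$, where $A:=(1-\bar\tau)(1+\alpha)(1+\theta)-\alpha$ and $R:=1-\bar\tau-\alpha\bar\tau(1-\theta)$. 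This has two immediate consequences used throughout. First, $\bar\sigma$ is by construction the larger root of $\det G(\cdot)$ (once $a>0$ is checked), so $\det G(\bar\sigma)=0$ automatically; hence $G(\bar\sigma)$ is positive semidefinite as soon as its two diagonal entries are nonnegative. Second, the first and third terms of the maximum in~\eqref{eq:hj34} are exactly the thresholds for $G_{22}(\bar\sigma)\ge 0$ and, using $A>0$, for $G_{11}(\bar\sigma)\ge 0$; so proving~\eqref{eq:hj34} is precisely what delivers the positive semidefiniteness of $G(\bar\sigma)$, and the middle term is harmless because it tends to $0$ as $\bar\tau\to 0$.

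It is convenient to dispose of the boundary case $\theta=1$ first, where everything is explicit: then $(1-\theta)^2=0$ and $c=0$, so $\bar\sigma=b/a=(1-\bar\tau)/A$, and since $R=1-\bar\tau$ here this equals the third term of the maximum in~\eqref{eq:hj34}, the other two terms being $0$; thus~\eqref{eq:hj34} holds (with equality) for every $\bar\tau\in(0,1/2)$. For such $\bar\tau$ one checks directly that $A>1>0$ and $\bar\sigma\in(0,1)$, and that $G(\bar\sigma)=\operatorname{diag}(0,\bar\sigma)$ is positive semidefinite. So any $\bar\tau\in(0,1/2)$ works when $\theta=1$.

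For $\theta\in(1,\theta^*)$ with $\theta^*:=(1-\alpha+\sqrt{\alpha^2+6\alpha+5})/2$, I would set $\bar\tau=0$ and write $p_0(\sigma)$ for $\det G(\sigma)$ evaluated there; $p_0$ is a quadratic with leading coefficient $a_0=A_0-(1-\theta)^2=\theta(3+\alpha-\theta)\ge 2>0$, since $\theta<\theta^*<2$ (indeed $\sqrt{\alpha^2+6\alpha+5}<\alpha+3$). Evaluating $p_0$ at the two endpoints gives
\[
p_0\bigl((1-\theta)^2\bigr)=-(\theta-1)^4\,\theta^2\le 0
\qquad\text{and}\qquad
p_0(1)=\theta^2\bigl[(1+\alpha)(2-\theta)-(\theta-1)^2\bigr].
\]
The decisive point — the only place where the stepsize restriction is really used — is that $(1+\alpha)(2-\theta)-(\theta-1)^2>0$ is \emph{equivalent} to $\theta<\theta^*$: rearranged, it reads $(\theta-1)^2+(1+\alpha)(\theta-1)-(1+\alpha)<0$, whose positive root in the variable $\theta-1$ is exactly $\theta^*-1$. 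Hence $p_0(1)>0$. Since $a_0>0$, $(1-\theta)^2<1$, and $p_0$ is $\le 0$ at $(1-\theta)^2$ and $>0$ at $1$, its larger root $\sigma_0:=\bar\sigma|_{\bar\tau=0}$ lies in $\bigl((1-\theta)^2,1\bigr)\subset(0,1)$; and from $p_0(\sigma_0)=0$, $G_{22}(\sigma_0)|_{\bar\tau=0}=\sigma_0-(1-\theta)^2>0$ and $G_{12}(\sigma_0)=(\sigma_0+\theta-1)(1-\theta)\ne 0$ we get $G_{11}(\sigma_0)|_{\bar\tau=0}>0$, i.e. $\sigma_0>1/A_0=R_0/A_0$. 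Thus at $\bar\tau=0$ all three terms of the maximum in~\eqref{eq:hj34} are strictly below $\sigma_0$, $\sigma_0\in(0,1)$, and $G(\sigma_0)|_{\bar\tau=0}$ is positive semidefinite.

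Finally, $a,b,c$, the discriminant $b^2-4ac$, the quantities $A$ and $(1-\bar\tau)\theta-\bar\tau$, the three threshold expressions, and $\bar\sigma$ itself all depend continuously (indeed rationally) on $\bar\tau$ near $0$, and the relations $a>0$, $b^2-4ac>0$ (strict at $\bar\tau=0$ because $p_0$ is negative at $(1-\theta)^2$), $A>0$, $(1-\bar\tau)\theta-\bar\tau>0$, $\bar\sigma\in(0,1)$ and $\max\{\cdots\}<\bar\sigma$ all hold strictly at $\bar\tau=0$. Hence they persist for all sufficiently small $\bar\tau>0$, and we fix one such $\bar\tau\in(0,1/2)$: for it, \eqref{eq:hj34} holds, $\bar\sigma\in(0,1)$, and by the backbone identity $\det G(\bar\sigma)=0$ while $G_{11}(\bar\sigma)\ge 0$ and $G_{22}(\bar\sigma)\ge 0$, so $G(\bar\sigma)$ is positive semidefinite. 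I expect the main obstacle to be the routine but lengthy verification of the determinant identity $\det G(\sigma)=a\sigma^2-b\sigma+c$ (and of the two endpoint values of $p_0$); the conceptual content lies in recognizing that $p_0(1)>0$ is exactly the stepsize condition $\theta<\theta^*$.
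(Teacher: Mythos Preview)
Your proposal is correct and follows essentially the same route as the paper: dispose of $\theta=1$ directly, then for $\theta>1$ set $\bar\tau=0$, verify the needed strict inequalities there (the use of $\theta<\theta^*$ being exactly the positivity of $p_0(1)=a-b+c$), and transfer them to a small $\bar\tau>0$ by continuity; finally obtain the positive semidefiniteness of $G(\bar\sigma)$ from $\det G(\bar\sigma)=0$ together with nonnegative diagonal entries. Your localization of $\sigma_0\in((1-\theta)^2,1)$ via the sign change of $p_0$ between $(1-\theta)^2$ and $1$ is a bit more economical than the paper's separate verification of $b>0$ and $b^2-4ac>0$, but the two arguments are otherwise the same.
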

\begin{proof}
First of all,  if $\theta=1$, then $\bar\sigma \in (0,1)$ for any $\bar\tau\in(0,1/2)$. 
Let us now assume that $\theta \in (1,(1-\alpha+\sqrt{\alpha^2+6\alpha+5})/2)$.
Note that, if $\bar\tau=0$, then
\begin{align*}
a=\theta[3-\theta+\alpha]>0, \; b=\theta[(3+\alpha)(1-\theta)^2+2-\theta]>0,
\; a-b+c=\theta^2[1+2\alpha+(1-\alpha)\theta-\theta^2]>0,
\end{align*}
where the last inequality is due to  the fact that $\theta \in (1,(1-\alpha+\sqrt{\alpha^2+6\alpha+5})/2)$. Moreover,
\[b^2-4ac=h(\alpha):=(5+6\alpha+\alpha^2)(1-\theta)^4+2(3+\alpha)(1-\theta)^3-(1+2\alpha)(1-\theta)^2-2(1-\theta)^2+1>0,\]
where the above inequality follows from the fact that  the minimum value of $h$ is greater than zero for any $\theta\in(1,2)$.
Therefore, we conclude that there exists $\bar\tau\in(0,1/2)$ close to $0$ such that
\begin{equation}\label{ae45}
a>0,\quad b>0,  \quad a-b+c>0, \quad b^2-4ac\geq0,
\end{equation}
 which in turn implies $\bar \sigma\in (0,1)$, concluding the proof of the first part of the lemma.

It is a simple algebraic computation to see that $\bar\sigma$ is the largest root of the  second-order equation
$\det(G(\sigma))=0$ and $\det(G(\sigma)) > 0$ for every $\sigma>\bar\sigma$. Moreover, since
$\det(G(\sigma))\leq 0$  for  $\sigma$ equal to   
$(1-\theta)^2$ and  $[{{1-\bar\tau(1+\alpha(1-\theta))}}]/[{(1-\tau)(1+\alpha)(1+\theta)-\alpha}]$, and 
\[{\bar\tau(\theta-1)}/[{(1-\bar\tau)\theta-\bar\tau}]\leq[{{1-\bar\tau(1+\alpha(1-\theta))}}]/[{(1-\tau)(1+\alpha)(1+\theta)-\alpha}]\]
we  obtain    \eqref{eq:hj34} holds. Therefore, since $\det(G(\bar\sigma))=0$, the diagonal entries of $G(\bar\sigma)$ are positive,
and $G(\bar\sigma)$ is symmetric, we conclude that  $G(\bar\sigma)$ is positive semidefinite. 
\end{proof}
%%%%%%%

In next proposition, we will prove that the sequences $\{z_k\}$ and $\{\tilde z_k\}$ as in proposition~\ref{pr:aux}  satisfy the error  condition~\eqref{eq:es.2} with $M=Q$ and appropriate choices of $\tau$, $\sigma$ and $\{\eta_k\}$.

%%%%%%%
\begin{proposition}\label{lemdeltaxy}
Assume that  $\theta \in (0,(1-\alpha+\sqrt{\alpha^2+6\alpha+5})/2)$.
Let   $\{(x_k,y_k,\gamma_k, \tilde{\gamma}_k)\}$ be  the kth iterate of a cycle of the  DR-ADMM and let $\{(\Delta x_k,\Delta y_k,\Delta \gamma_k)\}$ be as in \eqref{delta}. Consider $Q$ and $d_0$ as  in \eqref{seminorm} and  Lemma~\ref{lem:deltak}(b), respectively. Let $\tau$, $\sigma$ and $\{\eta_k\}$ as  
\begin{itemize}
\item[(i)] any $\tau\in(0,1)$,  $\sigma=\theta+(\theta-1)^2$, and  $\eta_k=0$ for all $k\geq0$, if $\theta\in (0,1)$;
\item[(ii)] $\tau=\bar \tau$ and $\sigma=\bar \sigma$, where $\bar \tau$ and $\bar \sigma$  are given by Lemma~\ref{pro:sigma}, and 
\begin{equation}\label{i645}  
 \eta_0= \frac{4(\bar\sigma+\theta-1)d_0}{(2-\theta)(1-\bar\tau)}, \quad \eta_k= \frac{[\bar\sigma-(\theta-1)^2]}{\beta\theta^3}\|\Delta \gamma_k\|^2+\frac{[\bar\sigma+\theta-1]}{\theta(1-\bar\tau)}\|\Delta y_k\|_{\alpha\beta B^*B+S}^2, \;  \forall k\geq 1,
\end{equation}
%otherwise.
if $\theta \in [1,(1-\alpha+\sqrt{\alpha^2+6\alpha+5})/2)$.
 \end{itemize}
 Then,
  $z_{k}=(x_{k},y_{k},\gamma_{k})$,  $\tilde{z}_k=({x}_k,y_k, \tilde{\gamma}_k)$, $\eta_{k-1}$ and $\eta_k$   satisfy the error  condition~\eqref{eq:es.2}
with $M=Q$. 
\end{proposition}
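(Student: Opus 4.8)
The plan is to verify the error condition \eqref{eq:es.2} with $M=Q$ directly, exploiting the block structure of $Q$. Since $z_k-\tilde z_k=(0,0,\gamma_k-\tilde\gamma_k)$ and $z_{k-1}-\tilde z_k=(\Delta x_k,\Delta y_k,\gamma_{k-1}-\tilde\gamma_k)$, the block‑diagonal form of $Q$ in \eqref{seminorm} together with the nonnegativity of $\sigma\|\Delta x_k\|_R^2$ reduces \eqref{eq:es.2} to
\[
(\theta\beta)^{-1}\|\gamma_k-\tilde\gamma_k\|^2+\eta_k\le\sigma\big[\|\Delta y_k\|_{(1+\alpha)\beta B^*B+S}^2+(\theta\beta)^{-1}\|\gamma_{k-1}-\tilde\gamma_k\|^2\big]+(1-\tau)\eta_{k-1}.
\]
Using Lemma~\ref{lem:deltak}(a) I would substitute $\gamma_{k-1}-\tilde\gamma_k=\beta B\Delta y_k+\theta^{-1}\Delta\gamma_k$ and $\gamma_k-\tilde\gamma_k=\beta B\Delta y_k-\theta^{-1}(\theta-1)\Delta\gamma_k$, expand the two squared terms in $\|B\Delta y_k\|^2$, $\langle B\Delta y_k,\Delta\gamma_k\rangle$ and $\|\Delta\gamma_k\|^2$, multiply through by $\theta\beta$, and insert the prescribed $\eta_k$. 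The coefficient of $\|\Delta\gamma_k\|^2$ then cancels between the left side and $\theta\beta\eta_k$, so the claim becomes $D\ge0$, where $D$ (equal to $\theta\beta$ times right‑minus‑left of the reduced inequality) is an explicit quantity in $\|B\Delta y_k\|^2$, $\|\Delta y_k\|_S^2$, $\langle B\Delta y_k,\Delta\gamma_k\rangle$ and $\theta\beta(1-\tau)\eta_{k-1}$.

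For case (i), where $\theta\in(0,1)$, $\eta_k\equiv0$ and $\sigma=\theta+(\theta-1)^2$, no monotonicity is needed: using $\sigma-(\theta-1)^2=\theta$ and $\sigma+\theta-1=\theta^2$ one rewrites $D$ as $\theta^{-1}\|\beta\theta^2 B\Delta y_k+\Delta\gamma_k\|^2+\alpha\theta\sigma\beta^2\|B\Delta y_k\|^2+\sigma\theta\beta\|\Delta y_k\|_S^2$, which is manifestly nonnegative; one also checks $\sigma=\theta^2-\theta+1\in(0,1)$ for $\theta\in(0,1)$.

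For case (ii), where $\theta\in[1,(1-\alpha+\sqrt{\alpha^2+6\alpha+5})/2)$, write $\sigma=\bar\sigma$, $\tau=\bar\tau$ and treat $k=1$ and $k\ge2$ separately. For $k=1$, Lemma~\ref{lem:deltak}(b) (applicable since this stepsize interval always lies in $[1,2)$) gives $\theta^{-1}\langle B\Delta y_1,\Delta\gamma_1\rangle\ge\tfrac12\|\Delta y_1\|_{\alpha\beta B^*B+S}^2-2\theta d_0/(2-\theta)$; multiplying by $2\beta(\bar\sigma+\theta-1)\ge0$ and inserting the prescribed $\eta_0$, the two $d_0$‑terms cancel exactly and $D$ collapses to $\beta^2(1-\bar\tau)^{-1}G_{11}(\bar\sigma)\|B\Delta y_1\|^2$ plus a nonnegative multiple of $\|\Delta y_1\|_S^2$, which is $\ge0$ because $G_{11}(\bar\sigma)\ge0$ (a diagonal entry of the positive semidefinite $G(\bar\sigma)$ of Lemma~\ref{pro:sigma}). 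For $k\ge2$, Lemma~\ref{lem:deltak}(c), again after multiplication by $\beta(\bar\sigma+\theta-1)/\theta\ge0$, replaces $\langle B\Delta y_k,\Delta\gamma_k\rangle$ by a combination of $\langle B\Delta y_k,\Delta\gamma_{k-1}\rangle$ and the seminorms $\|\Delta y_k\|_{\alpha\beta B^*B+S}^2$, $\|\Delta y_{k-1}\|_{\alpha\beta B^*B+S}^2$; the $\Delta y_{k-1}$‑contributions cancel against the corresponding part of $(1-\bar\tau)\eta_{k-1}$, and collecting the remaining terms leaves $D\ge\Phi(B\Delta y_k,\Delta\gamma_{k-1})+(\text{nonneg})\|\Delta y_k\|_S^2$, where $\Phi$ is the quadratic form whose matrix is obtained from $G(\bar\sigma)$ by conjugation with the positive diagonal matrix $\mathrm{diag}\big(\beta(1-\bar\tau)^{-1/2},(1-\bar\tau)^{1/2}\theta^{-1}\big)$. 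Since $G(\bar\sigma)$ is positive semidefinite by Lemma~\ref{pro:sigma}, $\Phi\ge0$, hence $D\ge0$. In both subcases, nonnegativity of the $\|\Delta y_k\|_S^2$‑coefficient is exactly the bound $\bar\sigma\ge\bar\tau(\theta-1)/((1-\bar\tau)\theta-\bar\tau)$ from \eqref{eq:hj34}, and identifying the $\|B\Delta y_k\|^2$‑coefficient with a multiple of $G_{11}(\bar\sigma)$ uses the remaining entry of that maximum.

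The main obstacle is the bookkeeping for $k\ge2$: one must track several cross terms at once and verify that, after applying Lemma~\ref{lem:deltak}(c), the residual quadratic form is \emph{exactly} the congruence image of $G(\bar\sigma)$ — in particular that the long coefficient of $\|B\Delta y_k\|^2$ reduces to a positive multiple of $G_{11}(\bar\sigma)$ and that the $\|\Delta y_{k-1}\|_{\alpha\beta B^*B+S}^2$‑terms cancel precisely — which is where the definitions of $a,b,c,\bar\sigma$ and of $G(\sigma)$ in Lemma~\ref{pro:sigma} all come into play. A secondary but necessary point is checking the sign conditions $\bar\sigma+\theta-1>0$, $\bar\sigma-(\theta-1)^2\ge0$ (both from \eqref{eq:hj34}) and $2-\theta>0$ (from the stepsize interval lying below $2$), which are needed to apply Lemma~\ref{lem:deltak} without reversing any inequalities.
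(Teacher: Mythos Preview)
Your proposal is correct and follows essentially the same route as the paper's own proof: reduce \eqref{eq:es.2} to the $y$- and $\gamma$-blocks via the block structure of $Q$, use Lemma~\ref{lem:deltak}(a) to expand $\gamma_{k-1}-\tilde\gamma_k$ and $\gamma_k-\tilde\gamma_k$, handle case~(i) by a direct completion of the square, and in case~(ii) split $k=1$ (Lemma~\ref{lem:deltak}(b)) from $k\ge2$ (Lemma~\ref{lem:deltak}(c)), in both subcases reducing the residual to a nonnegative multiple of $\|\Delta y_k\|_S^2$ plus a quadratic form congruent to $G(\bar\sigma)$; the sign checks you list from \eqref{eq:hj34} are exactly those the paper invokes. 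The only cosmetic difference is that you clear denominators by multiplying through by $\theta\beta$, whereas the paper works with the unscaled expression \eqref{eq:di23} and writes the final quadratic form as $(w_1,w_2)G(\bar\sigma)(w_1,w_2)^*$ with $w_1=\sqrt{\beta\theta/(1-\bar\tau)}\,B\Delta y_k$ and $w_2=\sqrt{(1-\bar\tau)/(\beta\theta)}\,\Delta\gamma_{k-1}$.
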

\begin{proof}
Using  definitions of  $z_k$, $\tilde z_k$ and $\Delta y_k$,  and the fact that $M=Q$, we have
\begin{align*}
\sigma\|{z_{k-1}} -\tz_{k}\|_M^2- \|{z_k}- {\tz}_k\|_M^2\geq &   { {(1+\alpha)\sigma\beta \|B\Delta y_{k}\|^2} }+ \sigma\|\Delta y_{k}\|_S^2+\frac{\sigma}{\beta\theta}\|\gamma_{k-1}-\tilde \gamma_k\|^2-\frac{1}{\beta\theta}\|\tilde{\gamma}_k-\gamma_k\|^2,
\end{align*} 
which, combined with \eqref{delta} and Lemma~\ref{lem:deltak}(a), yields
\begin{align}
&\sigma\|{z_{k-1}} -\tz_{k}\|_M^2- \|{z_k}- {\tz}_k\|_M^2\nonumber\\
 &\geq { {(1+\alpha)\sigma\beta \|B\Delta y_{k}\|^2} }+\sigma\|\Delta y_{k}\|_S^2+\frac{\sigma}{\beta\theta}\left\|\beta B\Delta y_{k}+\frac{\Delta {\gamma}_k}{\theta}\right\|^2-\frac{1}{\beta\theta}\left\|\beta B\Delta y_{k}+\frac{(1-\theta)\Delta \gamma_{k}}{\theta}\right\|^2 \nonumber\\
 &{\small =[(1+\alpha)\theta\sigma+{\sigma}-{1}]\frac{\beta\|B\Delta y_{k}\|^2}{\theta} +\sigma\|\Delta y_{k}\|_S^2 +[\sigma-(1-\theta)^2] \frac{\|\Delta {\gamma}_k\|^2}{\beta\theta^3}+\frac{2(\sigma+\theta-1)}{\theta^2}\langle \Delta \gamma_k,B\Delta y_{k}\rangle.} \label{eq:di23}
\end{align} 
If $\theta\in (0,1)$, then the last inequality and $\sigma=\theta+(\theta-1)^2$ imply that 
\[
\sigma\|{z_{k-1}} -\tz_{k}\|_M^2- \|{z_k}- {\tz}_k\|_M^2\geq [\theta+(\theta-1)^2]\| \Delta y_k\|_{\alpha\beta B^*B+S}^2+\left\|\theta\sqrt{\beta} B\Delta y_{k}+\frac{\Delta \gamma_{k}}{\theta\sqrt{\beta}}\right\|^2 \geq 0,
\]
which, combined with definition of $\{\eta_k\}$, proves the desired inequality.

Assume now  that $\theta \in [1,(1-\alpha+\sqrt{\alpha^2+6\alpha+5})/2)$.
Let us  consider two case: $k=1$ and $k>1$.
\\[1mm]
Case 1 ($k=1$):   It follows from  Lemma~\ref{lem:deltak}(b), definition of $\eta_0$ in \eqref{i645}, and $\theta\ge1$
that 
\begin{align*}
\frac{2(\bar{\sigma}+\theta-1)}{\theta^2}\langle B \Delta y_{1},\Delta \gamma_1 \rangle&\geq\frac{(\bar{\sigma}+\theta-1)}{\theta}\left(\alpha\beta\|B\Delta y_{1}\|^2+\|\Delta y_{1}\|_S^2\right)-(1-\bar\tau)\eta_0
\end{align*}
which,  combined with  \eqref{eq:di23} with $k=1$ and  definitions $\sigma$, $\tau$ and  $\eta_1$, yields
\begin{align*}
&\sigma\|{z_{0}} -\tz_{1}\|_M^2- \|{z_1}- {\tz}_1\|_M^2+(1-\tau)\eta_0-\eta_1\\
%& \geq\left[ \bar{\sigma}(1+\theta)-1 \right]\frac{\beta \|B\Delta y_{1}\|^2}{\theta}+\frac{1}{\theta} \left[   \theta\bar{\sigma}-%\frac{\bar\tau(\bar{\sigma}+\theta-1)}{1-\bar\tau} \right]\left(\alpha\beta \|B\Delta y_{1}\|^2+\|\Delta y_{1}\|_S^2\right)\\
& \geq\left[   (1+\alpha)\theta\bar{\sigma}+\bar{\sigma}-1-\frac{\alpha\bar\tau(\bar{\sigma}+\theta-1)}{1-\bar\tau} \right]\frac{\beta}{\theta} \|B\Delta y_{1}\|^2+ \left[   \theta\bar{\sigma}-\frac{\bar\tau(\bar{\sigma}+\theta-1)}{1-\bar\tau} \right]\frac{1}{\theta}\|\Delta y_{1}\|_S^2\geq0
%&=\frac{1}{\theta(1-\bar\tau)}\left[\left[\bar{\sigma}({(1-\bar\tau)(1+\alpha)(1+\theta)-\alpha})- {{1+\bar\tau(1+\alpha(1-\theta))}} %\right]{\beta} \|B\Delta y_{1}\|^2+ \left[ \bar{\sigma} ((1-\bar \tau)\theta-\bar\tau)- \bar\tau(\theta-1) \right]\|\Delta y_{1}\|%_S^2\right].
\end{align*}
where the last inequality is due to inequality \eqref{eq:hj34}. Thus, the error condition ~\eqref{eq:es.2} holds for $k=1$. 
\\[1mm]
Case 2 ($k>1$): Combining   estimate \eqref{eq:di23} with Lemma~\ref{lem:deltak}(c), we have 
\begin{align*}
&\sigma\|{z_{k-1}} -\tz_{k}\|_M^2- \|{z_k}- {\tz}_k\|_M^2
 \geq [(1+\alpha)(\theta\bar\sigma +\bar\sigma+\theta-1)-\theta]\frac{\beta\|B\Delta y_{k}\|^2}{\theta} +[\bar\theta\sigma+\bar\sigma+\theta-1]\frac{ \|\Delta y_{k}\|_S^2}{\theta} \\
 &+[\bar\sigma-(1-\theta)^2] \frac{\|\Delta\gamma_k\|^2}{\beta\theta^3}+\frac{2(1-\theta)(\bar\sigma+\theta-1)}{\theta^2}\inner{B\Delta y_{k}} {\Delta {\gamma_{k-1}}}
 -\frac{(\bar\sigma+\theta-1)}{\theta} \| \Delta y_{k-1}\|_{\alpha\beta B^*B+S}^2.
\end{align*} 
From the last inequality and definition of $\{\eta_k\}$  in \eqref{i645}, we obtain
\begin{align*}
&\sigma\|{z_{k-1}} -\tz_{k}\|_M^2- \|{z_k}- {\tz}_k\|_M^2+(1-\tau)\eta_{k-1}-\eta_k\\
&\geq\left[(1-\bar\tau)(\bar\sigma(1+\theta)-1)+\alpha({\theta\bar\sigma-\bar\tau(\bar\sigma+ \theta + \bar\sigma \theta -1)})\right]\frac{\beta\|B\Delta y_{k}\|^2}{(1-\bar\tau)\theta}
+\left[\bar\theta\bar\sigma-\frac{\bar\tau(\bar\sigma+\theta-1)}{1-\bar\tau}\right]\frac{ \|\Delta y_{k}\|_S^2}{\theta} \\
 &+(1-\bar\tau)[\bar\sigma-(1-\theta)^2] \frac{\|\Delta {\gamma}_{k-1}\|^2}{\beta\theta^3}+\frac{2(1-\theta)(\bar\sigma+\theta-1)}{\theta^2}\inner{B\Delta y_{k}}{\gamma_{k-1}-\Delta {\gamma}_{k-1}}\\
 &=\left[\bar\theta\bar\sigma-\frac{\bar\tau(\bar\sigma+\theta-1)}{1-\bar\tau}\right]\frac{ \|\Delta y_{k}\|_S^2}{\theta}+
 (w_1,w_2)G(\bar\sigma)(w_1,w_2)^*,
 \end{align*} 
 where $G(\bar\sigma)$ is as in  \eqref{matrixtheta1>1},  $w_1=(\sqrt{\beta\theta/(1-\tau)})B \Delta y_{k}$ and $w_2=(\sqrt{(1-\tau)/(\beta\theta)})\Delta \gamma_{k-1}$. Hence, the error  condition~\eqref{eq:es.2}  for $k>1$ now follows from Lemma~\ref{pro:sigma}.
\end{proof}

 We are now ready to prove  the main result of this section.

%%%%%
\begin{theorem}\label{th:maintheoADMM} 
Assume that  $\theta \in (0,(1-\alpha+\sqrt{\alpha^2+6\alpha+5})/2)$ and 
let $Q$ be as in \eqref{seminorm}. Then, 
the DR-ADMM is an instance of the DR-HPE framework for solving problem~\eqref{FAB} 
with inputs  $z_0=(x_0,y_0,\gamma_0)$, $M=Q$,  and parameters $\tau$, $\sigma$ and $\eta_0$  as defined in  Proposition~\ref{lemdeltaxy}. 
As a consequence, it terminates in at most
\begin{equation}\label{eq:iterRADMMtheta<1}
\mathcal{O}\left(\left(1+\frac{{d_0}}{\rho}\right)\left[1 + \log^+\left(\dfrac{{d_0}}{\rho}\right)\right]\right)
\end{equation} 
iterations with $(x,y,\tilde{\gamma},v^x, v^y,v^{\gamma})$  satisfying
\begin{equation}\label{a467}
Q \left( \begin{array}{c} v^x\\ v^y\\ v^{\gamma}
\end{array} \right) \in \left( \begin{array}{c} \partial f(x)-A^*\tilde{\gamma}\\\partial g(y)-B^*\tilde{\gamma}\\Ax+By-b
\end{array} \right)\quad \mbox{and }\quad \|(v^x,v^y,v^{\gamma})\|_Q \le \rho,
\end{equation}
where   $d_0$ is as in Lemma~\ref{lem:deltak}(b).
\end{theorem}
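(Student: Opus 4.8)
The strategy is to verify that the DR-ADMM fits the DR-HPE template exactly, and then invoke Theorem~\ref{th:main}. First I would identify the ambient data: set $\Z=\R^n\times\R^p\times\R^m$, take $M=Q$ with $Q$ as in \eqref{seminorm}, $z_0=(x_0,y_0,\gamma_0)$, and $z_k=(x_k,y_k,\gamma_k)$, $\tilde z_k=(x_k,y_k,\tilde\gamma_k)$, so that a ``cycle'' of the DR-ADMM corresponds precisely to the loop of steps~1--2 of the DR-HPE framework with a fixed $\mu$. The inclusion \eqref{eq:ec.2} with $M=Q$ and $T$ as in \eqref{FAB} is exactly the content of Proposition~\ref{pr:aux}, and the error condition \eqref{eq:es.2} with the parameters $\tau,\sigma,\{\eta_k\}$ specified in Proposition~\ref{lemdeltaxy} is exactly the content of that proposition. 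Thus the iterates produced in a cycle are legitimate DR-HPE iterates.

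Next I would match the stopping tests. The test \eqref{innerstop} of step~3 of the DR-ADMM is, by the definition of $Q$ in \eqref{seminorm} and of $\Delta x_k,\Delta y_k,\Delta\gamma_k$ in \eqref{delta}, nothing but $\|z_{k-1}-z_k\|_Q\le\rho/2$, i.e.\ the test in step~2 of the DR-HPE framework. Likewise, with $v_k=z_{k-1}-z_k-\mu(\tilde z_k-z_0)$, a componentwise computation shows that $v_k=(v_k^x,v_k^y,v_k^\gamma)$ as defined in step~4 of the DR-ADMM, and that $\|v_k\|_Q$ equals the left-hand side of \eqref{crit2}; hence step~4 of the DR-ADMM coincides with step~3 of the DR-HPE framework, including the halving $\mu\leftarrow\mu/2$ and the restart. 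Finally I would check the parameter hypotheses of Theorem~\ref{th:main}: in case $\theta\in(0,1)$ one has $\sigma=\theta+(\theta-1)^2<1$ and $\tau\in(0,1)$ arbitrary, both $\mathcal{O}(1)$; in case $\theta\in[1,(1-\alpha+\sqrt{\alpha^2+6\alpha+5})/2)$ one has $\sigma=\bar\sigma\in(0,1)$ and $\tau=\bar\tau\in(0,1/2)$ by Lemma~\ref{pro:sigma}, so $1/(1-\sigma)$ and $1/\tau$ are again $\mathcal{O}(1)$ (for fixed $\theta,\alpha$). Therefore Theorem~\ref{th:main} applies and yields termination with $Qv\in T(\tilde z)$, $\|v\|_Q\le\rho$, which is precisely \eqref{a467}.

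It remains to turn the complexity bound of Theorem~\ref{th:main} into \eqref{eq:iterRADMMtheta<1}, i.e.\ to replace the quantity $\sqrt{d^2+\eta_0}$ (with $d=\inf\{\|z_0-z\|_Q:z\in T^{-1}(0)\}$) by a constant multiple of $d_0$. Since $T^{-1}(0)$ is exactly the solution set of \eqref{FAB}, we have $d=d_0$. For $\theta\in(0,1)$ we have $\eta_0=0$, so $\sqrt{d^2+\eta_0}=d_0$ and the bound is immediate. For $\theta\ge1$, the definition $\eta_0=4(\bar\sigma+\theta-1)d_0/((2-\theta)(1-\bar\tau))$ in \eqref{i645} is linear in $d_0$; here I should note the mild abuse that $d_0$ in Lemma~\ref{lem:deltak}(b) is written as an infimum of a norm (so it has the dimensions of $\|z_0-z\|_Q$) while $\eta_0$ should be comparable to a squared norm — this is consistent because the bound in Theorem~\ref{th:main} only involves $\sqrt{d^2+\eta_0}$, and $\sqrt{d_0^2+\eta_0}\le \sqrt{d_0^2 + C d_0}\le (1+\sqrt{C})(1+d_0)$ for the constant $C=4(\bar\sigma+\theta-1)/((2-\theta)(1-\bar\tau))$, whence $\sqrt{d^2+\eta_0}=\mathcal{O}(1+d_0)$ and $1+\sqrt{d^2+\eta_0}=\mathcal{O}(1+d_0)$, $\log^+(\sqrt{d^2+\eta_0}/\rho)=\mathcal{O}(1+\log^+(d_0/\rho))$. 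Substituting into the bound of Theorem~\ref{th:main} gives \eqref{eq:iterRADMMtheta<1}.

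\textbf{Main obstacle.} The only genuinely delicate point is the identification of the DR-ADMM stopping quantities with the DR-HPE ones — in particular verifying that $v_k^\gamma=\Delta\gamma_k-\mu(\tilde\gamma_k-\gamma_0)$ is the third component of $z_{k-1}-z_k-\mu(\tilde z_k-z_0)$ (this uses that the third component of $z_k$ is $\gamma_k$ but of $\tilde z_k$ is $\tilde\gamma_k$, so the cancellation is not literal), and that the weighted norm in \eqref{innerstop}–\eqref{crit2} is exactly $\|\cdot\|_Q$. Everything else is bookkeeping that has already been done in Propositions~\ref{pr:aux} and~\ref{lemdeltaxy} and Lemma~\ref{pro:sigma}; the theorem is essentially an assembly of those pieces together with Theorem~\ref{th:main}.
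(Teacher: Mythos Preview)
Your proposal is correct and follows essentially the same route as the paper: set $z_k=(x_k,y_k,\gamma_k)$, $\tilde z_k=(x_k,y_k,\tilde\gamma_k)$, invoke Propositions~\ref{pr:aux} and~\ref{lemdeltaxy} for \eqref{eq:ec.2}--\eqref{eq:es.2}, verify that steps~3--4 of the DR-ADMM are exactly steps~2--3 of the DR-HPE framework (via the identification $\|\cdot\|_Q$ with the weighted norm in \eqref{innerstop}--\eqref{crit2}), and then apply Theorem~\ref{th:main}. The paper's proof is the same, only terser; you are more explicit about the $\mathcal{O}(1)$ hypotheses on $1/(1-\sigma)$ and $1/\tau$ and about matching the output $(\tilde z,v)$.

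On the dimensional issue you flagged: the paper's proof simply asserts ``$\eta_0=0$ or $\eta_0=\mathcal{O}(d_0^2)$'' and substitutes $\sqrt{d^2+\eta_0}=\mathcal{O}(d_0)$ into Theorem~\ref{th:main}. This is inconsistent with \eqref{i645} as written (linear in $d_0$), but inspecting the proof of Lemma~\ref{lem:deltak}(b) shows the bound there should be $-2\theta d_0^2/(2-\theta)$, since it comes from $\|z_0-\bar z_\mu\|_Q^2\le d_0^2$ via \eqref{reldist}; the $d_0$ in Lemma~\ref{lem:deltak}(b) and in \eqref{i645} is thus a typo for $d_0^2$. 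With that correction $\eta_0=\mathcal{O}(d_0^2)$ and \eqref{eq:iterRADMMtheta<1} follows directly. Your patch $\sqrt{d_0^2+Cd_0}=\mathcal{O}(1+d_0)$ is a sensible reading of the text as printed, but it yields a bound with $1+d_0$ in place of $d_0$, which does not literally reproduce \eqref{eq:iterRADMMtheta<1} when $d_0\ll\rho$.
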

\begin{proof}
Let $\{(x_k,y_k,\gamma_k, \tilde{\gamma}_k)\}$ be the  sequence generated by   a cycle  of  the  DR-ADMM
and  consider the sequences $\{z_k\}$ and $\{\tilde z_k\}$ defined by
\begin{equation}\label{e093}
 z_{k-1}=(x_{k-1},y_{k-1},\gamma_{k-1}), \quad \tilde{z}_k=(x_k,y_k,\tilde{\gamma}_k) , \quad  \forall k \geq1. 
 \end{equation}
It follows from Propositions \ref{pr:aux} and \ref{lemdeltaxy} that the sequences $\{z_k\}$ and $\{\tilde{z}_k\}$  satisfy
inclusion~\eqref{eq:ec.2}  and the error condition~\eqref{eq:es.2} with $T$ as in \eqref{FAB}, $M=Q$, and 
 $\tau$, $\sigma$ and $\{\eta_k\}$ as defined in  Proposition~\ref{lemdeltaxy}. Moreover,  using    $M=Q$ and \eqref{e093}, it is easy to see that  
steps 3 and 4 of the DR-ADMM  correspond to steps 2 and 3 of the DR-HPE framework, respectively.
Therefore, the first statement of the theorem is proved.

Now, since $\eta_0=0$ or $\eta_0=\mathcal{O}(d_0^2)$, 
the second part of the theorem follows  from  the first one and Theorem~\ref{th:main} with $M=Q$, $T$ as  in \eqref{FAB}, $v=(v^x, v^y,v^{\gamma})$, $\tilde{z}=(x,y,\tilde{\gamma})$ and $d=d_0$.
\end{proof}

We end this section by making two remarks. 1)  As already mentioned in Section~\ref{sec:int},  if   $\alpha$ is sufficiently large  (resp. $\alpha=0$), then the stepsize  $\theta$
belong to the interval  $ (0,2)$ (resp. $(0,(1+\sqrt{5})/2)$). 
2) Note that    \eqref{a467}  can be seen as an optimality/feasibility measure of \eqref{optl}.
Indeed, since $Q$ is symmetric  semidefinite positive, if  $\|(v^x,v^y,v^{\gamma})\|_Q=0$, then the left-hand side of the inclusion in~\eqref{a467} is zero,  and hence the pair $(x,y)$ is a solution of \eqref{optl} and $\tilde{\gamma}$ is an associated Lagrange multiplier.

%As mentioned in the introduction,  \eqref{a467} is an optimality/feasibility measure of \eqref{optl}.
%The pointwise iteration-complexity bound \eqref{eq:iterRADMMtheta<1}
 %is considerably  better than the bound
%$\mathcal{O}(d_0^2/\rho^2)$ for the  ADMM established  in \cite{He2015}. 
% ADD THE FOLLOWING COUPLE LINES INTO YOUR PREAMBLE
\let\OLDthebibliography\thebibliography
\renewcommand\thebibliography[1]{
  \OLDthebibliography{#1}
  \setlength{\parskip}{0.3pt}
  \setlength{\itemsep}{0pt plus 0.3ex}
}

%{\setlength{\bibsep}{0.1pt}
{

%\renewcommand{\baselinestretch}{.7}

%\linearspread{1.0}

%\appendix
%\section{Proof of Theorem~\ref{th:ergHPE}}

{
\scriptsize 	%7
%\footnotesize %8
%\small 	%9
\def\cprime{$'$}

}
}

%\bibliographystyle{plain}
%\bibliography{RADMM_ref}

%%%%%%%%
%%%%%%%%%
\end{document}